\newtheorem{theorem}{Theorem}
\newtheorem{lemma}[theorem]{Lemma}
\newtheorem{proposition}[theorem]{Proposition}
\theoremstyle{definition}
\theoremstyle{remark}
\numberwithin{equation}{section}
\newcommand{\D}{\mathbb{D}}
\newcommand{\DD}{\widehat{\mathcal{D}}}
\newcommand{\Dd}{\check{\mathcal{D}}}
\newcommand{\M}{\mathcal{M}}
\newcommand{\DDD}{\mathcal{D}}
\newcommand{\De}{\mathcal{D}}
\newcommand{\N}{\mathbb{N}}
\newcommand{\C}{\mathbb{C}}
\renewcommand{\phi}{\varphi}
       \def\De{{\Delta}}    
     \def\om{\omega}      
                  \def\z{\zeta}
                  \def\vp{\varphi}
\renewcommand{\H}{\mathcal{H}}
\begin{document}

\title[GENERALIZED WEIGHTED COMPOSITION OPERATORS ON BERGMAN SPACES]{GENERALIZED WEIGHTED COMPOSITION OPERATORS ON BERGMAN SPACES INDUCED BY DOUBLING WEIGHTS}

\keywords{weighted composition operators, weighted Bergman space, doubling weights, Carleson measure}
\subjclass[2010]{Primary 47B33 \and Secondary 30H20 }
\author{Bin Liu}
\address{University of Eastern Finland, P.O.Box 111, 80101 Joensuu, Finland}
\email{binl@uef.fi}

\thanks{This research was supported in part by China Scholarship Council No. 201706330108.}

\begin{abstract}
Bounded and compact generalized weighted composition operators acting from the weighted Bergman space $A^p_\omega$, where $0<p<\infty$ and $\omega$ belongs to the class $\DDD$ of radial weights satisfying a two-sided doubling condition, to a Lebesgue space $L^q_\nu$ are characterized. On the way to the proofs a new embedding theorem on weighted Bergman spaces $A^p_\omega$ is established. This last-mentioned result generalizes the well-known characterization of the boundedness of the differentiation operator $D^n(f)=f^{(n)}$ from the classical weighted Bergman space $A^p_\alpha$ to the Lebesgue space $L^q_\mu$, induced by a positive Borel measure $\mu$, to the setting of doubling weights. 
\end{abstract}

\maketitle

\section{INTRODUCTION AND MAIN RESULTS}

Let $\H(\D)$ denote the space of analytic functions in the unit disc $\D=\{z\in\C:|z|<1\}$. An integrable function $\om:\D\to[0,\infty)$ is a weight. It is radial if $\om(z)=\om(|z|)$ for all $z\in\D$. For $0<p<\infty$ and a weight $\omega$, the weighted Bergman space $A^p_\omega$ consists of $f\in\H(\D)$ such that
    $$
    \|f\|_{A^p_\omega}^p=\int_\D|f(z)|^p\omega(z)\,dA(z)<\infty,
    $$
where $dA(z)=\frac{dx\,dy}{\pi}$ is the normalized
Lebesgue area measure on $\D$. The corresponding Lebesgue space is denoted by $L^p_\om$, and thus $A^p_\om=L^p_\om\cap\H(\D)$. As usual,~$A^p_\alpha$ stands for the classical weighted
Bergman space induced by the standard radial weight $\omega(z)=(1-|z|^2)^\alpha$, where
$-1<\alpha<\infty$. For $0<p\le\infty$, the Hardy space $H^p$ consists of functions $f\in\H(\D)$ such that 
     $$
     \|f\|_{H^p}=\sup_{0<r<1} M_p(r,f)< \infty,
     $$
where
     $$
     M_p(r,f)=\left(\frac{1}{2\pi}\int_0^{2\pi}|f(re^{i\theta})|^p d\theta\right)^{\frac{1}{p}}, \quad 0<p<\infty,
     $$
are the $L^p$-means and $M_\infty(r,f)=\max_{|z|=r}|f(z)|$ is the maximum modulus function.

For a radial weight $\om$, write $\widehat{\om}(z)=\int_{|z|}^1\om(s)\,ds$ for all $z\in\D$. A weight $\om$ belongs to the class~$\DD$ if
there exists a constant $C=C(\om)\ge1$ such that $\widehat{\om}(r)\le C\widehat{\om}(\frac{1+r}{2})$ for all $0\le r<1$.
Moreover, if there exist $K=K(\om)>1$ and $C=C(\om)>1$ such that $\widehat{\om}(r)\ge C\widehat{\om}\left(1-\frac{1-r}{K}\right)$ for all $0\le r<1$, then we write $\om\in\Dd$. Weights $\om$ belonging to $\DDD=\DD\cap\Dd$ are called doubling.
If there exist $C=C(\om)>1$ and $K=K(\om)>1$ such that $\om_{x}\ge C\om_{Kx}$ for all $x\ge1$, we write $\om\in\M$.

Let $\vp$ be an analytic self-map of $\D$. The function $\varphi$ induces a composition operator $C_\varphi$ on $\H(\D)$ defined by $C_\varphi f=f\circ\varphi$. The weighted composition operator induced by a self-map $\vp$ and $u\in\H(\D)$ is the operator $u C_\vp$ that sends $f$ to the analytic function  $u\cdot f\circ\varphi$. These operators have been extensively studied in a variety of function spaces, see~for~example \cite{CCM,ZR,GTE,SJH1,SJH2,SW,ZK}. 

The differentiation operator $D$ on $\H(\D)$ is defined by $Df=f'$. Further, for $n\in \N$, we define $D^nf=f^{(n)}$. By following~\cite{ZX}, the generalized weighted composition operator $D^n_{\varphi,u}$ is defined by 
\begin{equation}
D^n_{\varphi,u}f=u\cdot f^{(n)}\circ\varphi,
\end{equation}
where $\varphi$ is an analytic self-map of $\D$, $u\in\H(\D)$ and $n\in\N$. Obviously, if $n=0$ and $u\equiv1$, then this operator reduces to the composition operator $C_\varphi$, and if $n=0$, then we get the weighted composition operator $uC_\varphi$. If $n=1$ and $u(z)=\varphi'(z)$, then $D^n_{\varphi,u}=DC_\varphi$ which was studied in \cite{HR} \cite{LS} . When $n=1$ and $u\equiv1$, then $D^n_{\varphi,u}=C_\varphi D$ which was studied in \cite{HR}.

In \cite{ZX1}, the author characterized the boundedness and compactness of the operators $D^n_{\varphi,u}$ between different standard weighted Bergman spaces. In this paper, we characterize bounded and compact operators $D^n_{\varphi,u}$ acting from the weighted Bergman space $A^p_\omega$ with $\omega \in \DDD$ to $L^q_\nu$ induced by any $0<q<\infty$ and a positive Borel measure $\nu$.   

To state main results, some more notation is needed. Let $\mu$ be a finite positive Borel measure on $\D$ and $h$ a measurable function on $\D$. For an analytic self-map $\varphi$ of $\D$, the weighted pushforward measure related to $h$ is defined by 
	\begin{equation}\label{Eq:pusforward}
	\varphi_*(h,\mu)(M)=\int_{\varphi^{-1}(M)}h d\mu
	\end{equation}
for each measurable set $M\subset\D$. If $\mu$ is the Lebesgue measure, we omit the measure in the notation and write $\varphi_*(h)(M)$ for the left hand side of \eqref{Eq:pusforward}. Here and from now on $S(z)=\{\zeta\in\D:1-|z|<|\zeta|,\,|\arg \zeta-\arg z|<(1-|z|)/2\}$ is the Carleson square induced by the point $z\in\D\setminus\{0\}$, $S(0)=\D$ and $\om(E)=\int_E\om dA$ for each measurable set $E\subset\D$. Further, $\Gamma(z)=\left\{\z\in\D:|\theta-\arg \z|<\frac{1}{2}\left(1-\frac{|\z|}{r}\right)\right\}$, $z=re^{i\theta}\in\overline{\D}\setminus\{0\}$ is a non-tangential approach region with vertex at $z\in\overline{\D}\setminus\{0\}$, and $T(z)=\{\z\in\D:z\in\Gamma(\z)\}$ is the tent induced by $z\in\D\setminus\{0\}$. Observe that we have $\om(T(z))\asymp\om(S(z))$ for all $z\in\D\setminus\{0\}$ if $\om\in\DD$. The statement of our first result involves pseudohyperbolic discs. The pseudohyperbolic distance between two points $a$ and $b$ in $\D$ is $\rho(a,b)=|(a-b)/(1-\overline{a}b)|$. For $a\in\D$ and $0<r<1$, the pseudohyperbolic disc of center $a$ and of radius $r$ is $\Delta(a,r)=\{z\in \D:\rho(a,z)<r\}$. It is well known that $\Delta(a,r)$ is an Euclidean disk centered at $(1-r^2)a/(1-r^2|a|^2)$ and of radius $(1-|a|^2)r/(1-r^2|a|^2)$. Finally, denote $\widetilde{\omega}(z)=\widehat{\omega}(z)/(1-|z|)$ for all $z\in\D$. By \cite[Proposition~5]{JJK} we know that
	\begin{equation}\label{eq:equivalent norms}
	\|f\|_{A^p_{\widetilde{\om}}}\asymp\|f\|_{A^p_\om},\quad f\in\H(\D),
	\end{equation}
provided $\om\in\DDD$.

Our first result reads as follows:

\begin{theorem} \label{Carleson DD}
Let $0<q<p<\infty$, $\omega\in \DDD$, $n \in \N \cup\{0\}$, and let $\nu$ be a positive Borel measure on $\D$. Let $\varphi$ be an analytic self-map of $\D$ and $u\in L^q_\nu$. Then the following
statements are equivalent:
  \begin{itemize}
   \item[\rm(i)] $D^n_{\varphi,u}: A^p_\omega \rightarrow L^q_\nu$ is bounded;
   \item[\rm(ii)] $D^n_{\varphi,u}: A^p_\omega \rightarrow L^q_\nu$ is compact;
   \item[\rm(iii)]$\frac{\varphi_*(|u|^q \nu)(\Delta(z,r))}{\omega(S(z))(1-|z|)^{nq}}$ belongs to $L^{\frac{p}{p-q}}_{\widetilde{\omega}}$ 
   for some (equivalently for all) $r \in (0,1)$.
  \end{itemize}
\end{theorem}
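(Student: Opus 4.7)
The plan is to reduce the operator-theoretic statement to a Carleson-type embedding theorem for the derivative map. Using the weighted pushforward,
\begin{equation*}
\|D^n_{\varphi,u}f\|_{L^q_\nu}^q=\int_\D|u(z)|^q|f^{(n)}(\varphi(z))|^q\,d\nu(z)=\int_\D|f^{(n)}(w)|^q\,d\mu(w),\qquad \mu:=\varphi_*(|u|^q\nu),
\end{equation*}
so the theorem reduces to characterizing boundedness/compactness of $f\mapsto f^{(n)}$ from $A^p_\omega$ into $L^q_\mu$. I would state and prove this derivative-embedding result as the new embedding theorem promised in the abstract and deduce the theorem from it.

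For the implication (iii)$\Rightarrow$(i), I would combine three ingredients. The sub-mean value property for $|f^{(n)}|^q$ together with Fubini gives
\begin{equation*}
\int_\D|f^{(n)}|^q\,d\mu\lesssim\int_\D|f^{(n)}(\zeta)|^q\,\frac{\mu(\Delta(\zeta,r))}{(1-|\zeta|)^2}\,dA(\zeta).
\end{equation*}
Setting $\Psi(\zeta):=\mu(\Delta(\zeta,r))/[\omega(S(\zeta))(1-|\zeta|)^{nq}]$ and using $\omega(S(\zeta))\asymp(1-|\zeta|)^2\widetilde{\omega}(\zeta)$, the right-hand side equals $\int_\D(1-|\zeta|)^{nq}|f^{(n)}(\zeta)|^q\Psi(\zeta)\widetilde{\omega}(\zeta)\,dA(\zeta)$. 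H\"older's inequality with conjugate exponents $p/q$ and $p/(p-q)$ then yields
\begin{equation*}
\int_\D|f^{(n)}|^q\,d\mu\lesssim\left(\int_\D(1-|\zeta|)^{np}|f^{(n)}(\zeta)|^p\widetilde{\omega}(\zeta)\,dA(\zeta)\right)^{q/p}\|\Psi\|_{L^{p/(p-q)}_{\widetilde{\omega}}},
\end{equation*}
and the first factor is bounded by $\|f\|_{A^p_\omega}^q$ via \eqref{eq:equivalent norms} and a higher-order Littlewood--Paley equivalence for weights in $\DDD$.

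For (i)$\Rightarrow$(iii), I would run a Luecking-type testing-family argument. Fix a separated $\delta$-lattice $\{a_k\}\subset\D$ whose pseudohyperbolic discs $\Delta(a_k,r)$ cover $\D$ with bounded overlap, and assemble peaked atoms $g_k$, supplied by an atomic decomposition of $A^p_\omega$ for $\omega\in\DDD$, such that $|g_k^{(n)}(w)|\asymp(1-|a_k|)^{-n}$ on $\Delta(a_k,r)$ and, for every finite sequence $\{\lambda_k\}$ and every choice of Rademacher signs $\epsilon_k\in\{\pm1\}$,
\begin{equation*}
\Bigl\|\sum_k\epsilon_k\lambda_k g_k\Bigr\|_{A^p_\omega}^p\lesssim\sum_k|\lambda_k|^p\omega(S(a_k)).
\end{equation*}
Averaging $\|D^n_{\varphi,u}\sum_k\epsilon_k\lambda_k g_k\|_{L^q_\nu}^q$ over the signs via Khinchine's inequality and invoking the boundedness hypothesis produces
\begin{equation*}
\sum_k|\lambda_k|^q\,\frac{\mu(\Delta(a_k,r))}{(1-|a_k|)^{nq}}\lesssim\|D^n_{\varphi,u}\|^q\left(\sum_k|\lambda_k|^p\omega(S(a_k))\right)^{q/p},
\end{equation*}
and the duality between $\ell^{p/q}$ and $\ell^{p/(p-q)}$ translates this into the discrete analogue of (iii). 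A standard continuous-to-discrete comparison based on $\Delta(\zeta,r)\subset\Delta(a_k,R)$ for $\zeta\in\Delta(a_k,r)$ together with $\int_{\Delta(a_k,r)}\widetilde{\omega}\,dA\asymp\omega(S(a_k))$ then yields the integral statement.

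The implication (iii)$\Rightarrow$(ii) follows by localizing the sufficiency bound above: for a bounded sequence $\{f_j\}\subset A^p_\omega$ that converges to $0$ uniformly on compact subsets of $\D$, splitting the integral at $\{|\zeta|<r_0\}$ and using that the tail of $\Psi^{p/(p-q)}\widetilde{\omega}$ vanishes as $r_0\to1$ forces $\|D^n_{\varphi,u}f_j\|_{L^q_\nu}\to0$. Since (ii)$\Rightarrow$(i) trivially, all three statements are equivalent. I expect the principal obstacle to lie in the necessity direction, namely constructing the atomic family $\{g_k\}$ with the two-sided $A^p_\omega$-norm estimate in the doubling-weight setting where explicit reproducing-kernel formulas are unavailable; this requires an atomic decomposition tailored to $\DDD$, probably established elsewhere in the paper and then combined carefully with the Khinchine--duality step.
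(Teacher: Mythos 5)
Your proposal follows essentially the same route as the paper: reduce to the embedding $D^n:A^p_\omega\to L^q_{\varphi_*(|u|^q\nu)}$ via the change of variables, prove sufficiency by the sub-mean-value estimate, Fubini and H\"older against $\|\Psi\|_{L^{p/(p-q)}_{\widetilde\omega}}$ together with a Littlewood--Paley equivalence for $\DDD$, and prove necessity by testing on lattice-based kernel sums combined with Khinchine's inequality and $\ell^{p/q}$--$\ell^{p/(p-q)}$ duality. The atomic family whose construction you flag as the main obstacle is exactly the one the paper imports from Pel\'aez--R\"atty\"a--Sierra (the functions $\bigl(\tfrac{1-|z_k|}{1-\overline{z_k}z}\bigr)^\lambda\omega(T(z_k))^{-1/p}$ with $\|\sum_k b_k(\cdot)\|_{A^p_\omega}\lesssim\|b\|_{\ell^p}$), so no new construction is needed.
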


We will need one specific tool for the proof of Theorem~\ref{Carleson DD}. To put it in a bigger whole, couple of words on continuous embeddings $A^p_\om\subset L^q_\mu$ are in order. For a positive Borel measure $\mu$ on $\D$ and $0<\alpha<\infty$, define the weighted maximal function 
	\begin{equation*}
	M_{\omega,\alpha}(\mu)(z)=\sup_{z\in S(a)}\frac{\mu(S(a))}{(\omega(S(a)))^\alpha},\quad z\in \D,
	\end{equation*}
and write $M_\omega(\mu)=M_{\omega,1}(\mu)$ for short. If the identity operator $I:A^p_\om\to L^q_\mu$ is bounded, then $\mu$ is called a $q$-Carleson measure for $A^p_\om$. A complete characterization of such measures in the case $\om\in\DD$ can be found in \cite{PelRatEmb}, see also \cite{PR,PelSum14}. In particular, it is known that if $q\ge p$ and $\om\in\DD$, then $\mu$ is a $q$-Carleson measure for $A^p_\om$ if and only if $M_{\omega,q/p}(\mu) \in L^\infty$, and if $p>q$, then $\mu$ is a $q$-Carleson measure for $A^p_\om$ if and only if $M_\omega(\mu) \in L^{\frac{p}{p-q}}_\omega$. The standard Littlewood-Paley formula implies that the bounded differentiation operators $D^n f=f^{(n)}$ from $A^p_\alpha$ to $L^q_\mu$ can be characterized once the $q$-Carleson measures for $A^p_\alpha$ are characterized. A characterization of such operators on the weighted Bergman spaces $A^p_\omega$ with $\omega\in\DD$ can be found in \cite{PelRatEmb}. In this paper, we are interested in the case of $\omega\in\DDD$ and this is what we need for the proof of Theorem~\ref{Carleson DD}. The following theorem is our second main result and generalizes \cite[Theorem~2.2]{LD1} and \cite[Theorem~1]{LD} to doubling weights.

\begin{theorem}\label{Carlson D}
Let $0<p, q< \infty$ , $\omega \in \DDD$ and $n \in \N \cup\{0\}$, and let $\mu$ be a positive Borel measure on $\D$. 
\begin{itemize}
\item[(a)]If $0<p\leq q< \infty$, then the following statements hold:
  \begin{itemize}
   \item[(i)] $D^{n}: A^p_\omega \rightarrow L^q_\mu$ is bounded if and only if 

     \begin{equation} \label{eq:bounded}
     \sup_{z\in \D} \frac{\mu(\Delta(z,r))}{\omega(S(z))^{\frac{q}{p}}(1-|z|)^{nq}}< \infty.
     \end{equation}
   \item[(ii)] $D^{n}: A^p_\omega \rightarrow L^q_\mu$ is compact if and only if 

       \begin{equation} \label{eq:compact}
       \lim_{|z|\rightarrow 1^-} \frac{\mu(\Delta(z,r))}{\omega(S(z))^{\frac{q}{p}}(1-|z|)^{nq}} =0.
       \end{equation}
   \end{itemize}
\item[(b)] If $0<q<p<\infty$, then the following conditions are equivalent:
   \begin{itemize}
     \item[(i)] $D^{n}: A^p_\omega \rightarrow L^q_\mu$ is bounded;
     \item[(ii)] $D^{n}: A^p_\omega \rightarrow L^q_\mu$ is compact;
     \item[(iii)] the funcion 
			$$
			z\mapsto\frac{\mu(\Delta(z,r))}{\omega(S(z))(1-|z|)^{nq}}
			$$ 
     belongs to $L^{\frac{p}{p-q}}_{\widetilde{\omega}}$ for some (equivalently for all) 
      $r\in (0,1)$.
   \end{itemize}
\end{itemize}
\end{theorem}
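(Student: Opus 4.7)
The strategy is to reduce both parts to the Carleson embedding characterization on $A^p_\omega$ for weights in $\DD$ recorded in the introduction. The bridge is the Littlewood--Paley type norm equivalence
\begin{equation*}
\|f\|_{A^p_\omega}^p\asymp\sum_{k=0}^{n-1}|f^{(k)}(0)|^p+\int_\D|f^{(n)}(z)|^p(1-|z|)^{np}\omega(z)\,dA(z),\qquad f\in\HO(\D),
\end{equation*}
valid for $\omega\in\DDD$, which I would derive by iterating the first order version (itself a consequence of \eqref{eq:equivalent norms} together with standard subharmonic averaging and Hardy's inequality). Writing $\omega_n(z)=(1-|z|)^{np}\omega(z)$, this equivalence says that $f\mapsto f^{(n)}$ is a topological isomorphism from $A^p_\omega$ modulo polynomials of degree less than $n$ onto $A^p_{\omega_n}$. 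Consequently, $D^n\colon A^p_\omega\to L^q_\mu$ is bounded (respectively compact) if and only if $\mu$ is a (respectively vanishing) $q$-Carleson measure for $A^p_{\omega_n}$.

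The next step is to verify $\omega_n\in\DD$. Splitting the defining integral of $\widehat{\omega_n}$ at $(1+r)/2$ and using $\omega\in\DD$ yields $\widehat{\omega_n}(r)\asymp(1-r)^{np}\widehat{\omega}(r)$, so that the doubling of $\widehat{\omega_n}$ descends from that of $\widehat{\omega}$. This delivers the two identities
\begin{equation*}
\omega_n(S(z))\asymp(1-|z|)^{np}\omega(S(z))\qquad\text{and}\qquad\widetilde{\omega_n}(z)\asymp(1-|z|)^{np}\widetilde{\omega}(z).
\end{equation*}
Plugging $\omega_n$ into the Carleson characterization for $\DD$ weights, in the case $0<p\le q$ boundedness becomes $\sup_z\mu(S(z))/\omega_n(S(z))^{q/p}<\infty$, which after the substitution $\omega_n(S(z))^{q/p}\asymp(1-|z|)^{nq}\omega(S(z))^{q/p}$ and a standard covering that exchanges Carleson squares for pseudohyperbolic discs reproduces \eqref{eq:bounded}. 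In the case $0<q<p$ the condition becomes $z\mapsto\mu(S(z))/\omega_n(S(z))\in L^{p/(p-q)}_{\widetilde{\omega_n}}$; setting $s=p/(p-q)$, the exponents combine as $(1-|z|)^{-nps}(1-|z|)^{np}=(1-|z|)^{-nqs}$, yielding exactly the condition in (b)(iii) after the covering step.

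For the compactness statements: in (a)(ii) necessity follows from normalized peaking test functions in $A^p_\omega$ localised at $a\in\D$ whose $n$-th derivatives realise the Carleson ratio, while sufficiency follows via the standard splitting $\mu=\mu\vert_{\{|z|\le r\}}+\mu\vert_{\{|z|>r\}}$, with the first part controlled by uniform convergence on compacta of $A^p_\omega$-bounded sequences and the second by the vanishing Carleson constant as $r\to1^-$. In (b) the equivalence of boundedness and compactness in the range $p>q$ is routine: the $L^{p/(p-q)}_{\widetilde\omega}$-integrability forces the tail of the Carleson ratio to vanish, and combining this with the pointwise estimate $|f^{(n)}(z)|^q\lesssim\|f\|_{A^p_\omega}^q/[\omega(S(z))^{q/p}(1-|z|)^{nq}]$ through H\"older yields $\|D^n f_k\|_{L^q_\mu}\to0$ for every $A^p_\omega$-bounded sequence $\{f_k\}$ tending to zero on compacta. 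I expect the main obstacle to lie in establishing and exploiting the iterated Littlewood--Paley equivalence under the sole hypothesis $\omega\in\DDD$, and in carefully propagating the doubling property to $\omega_n$; thereafter the argument is algebraic matching of exponents together with standard subharmonic and covering estimates.
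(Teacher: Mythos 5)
Your proposal is essentially correct, but it takes a genuinely different route from the paper. You reduce the whole theorem to the case $n=0$ by means of the Littlewood--Paley equivalence $\|f\|_{A^p_\omega}^p\asymp\sum_{k=0}^{n-1}|f^{(k)}(0)|^p+\|f^{(n)}\|_{A^p_{\omega_n}}^p$ with $\omega_n(z)=(1-|z|)^{np}\omega(z)$, so that $D^n:A^p_\omega\to L^q_\mu$ is bounded (compact) exactly when $\mu$ is a (vanishing) $q$-Carleson measure for $A^p_{\omega_n}$, and then you invoke the known $n=0$ characterizations with the weight $\omega_n$. The paper instead keeps $n$ in play throughout: for part (a) it proves necessity with the test functions $f_a$ of \eqref{eq:testfunctions} and sufficiency via the subharmonicity estimate $|f^{(n)}(z)|^p\lesssim(1-|z|)^{-2-np}\int_{\Delta(z,r)}|f|^p\,dA$ plus Minkowski's inequality in continuous form; for part (b) it proves (i)$\Rightarrow$(iii) by the atomic-decomposition/Khinchine argument of Luecking (using the functions $F=\sum_kb_k(\cdot)$ built on an $r$-lattice) and (iii)$\Rightarrow$(ii) by the local estimate \eqref{f}, Fubini and H\"older. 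Your reduction is more economical and conceptually cleaner, and the key input is available in the literature the paper already cites: the two-sided Littlewood--Paley formula for radial weights holds precisely for $\omega\in\DDD$ (\cite[Theorem~5]{PelRat2020}, of which the paper uses one inequality in Lemma~\ref{subharmonic}). What the paper's route buys is independence from that nontrivial theorem (your parenthetical claim that the first-order version follows from ``standard subharmonic averaging and Hardy's inequality'' considerably understates its proof, especially for small $p$; you should cite it rather than re-derive it) and a self-contained treatment of the $q<p$ case.

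Two points need to be tightened for your reduction to close. First, in the range $q<p$ the characterization in the form (b)(iii) --- with $\mu(\Delta(z,r))$ and the space $L^{p/(p-q)}_{\widetilde{\omega}}$ --- is the $\DDD$ version of \cite{LRW2020}, not the maximal-function $\DD$ version quoted in the introduction; so you must verify $\omega_n\in\DDD$, i.e.\ also $\omega_n\in\Dd$, not only $\omega_n\in\DD$. This is routine from $\widehat{\omega_n}(r)\asymp(1-r)^{np}\widehat{\omega}(r)$ and \eqref{Eq:characterization-D}, but it is not optional. Second, you should record explicitly that compactness also transfers under the isomorphism $f\mapsto f^{(n)}$: a bounded sequence in $A^p_\omega$ tending to zero on compacta has $n$-th derivatives bounded in $A^p_{\omega_n}$ and tending to zero on compacta (Cauchy estimates), and conversely one may choose antiderivatives with vanishing Taylor coefficients of order $<n$; with that observation the compactness claims in both (a)(ii) and (b)(ii) follow from the $n=0$ results as well, and your separate sketches for them become redundant.
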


When $n=0$, Parts (a) and (b) of Theorem~\ref{Carlson D} convert to \cite[Theorem 2]{LR2020} and \cite[Theorem 2]{LRW2020}, respectively. If $n\in\N$, the method of proof of \cite[Theorem 2]{LR2020} gives (a) immediately. Therefore our contribution consists of proving (b) for $n\in\N$ by establishing the implications (ii)$\Rightarrow$ (i)$\Rightarrow$ (iii)$\Rightarrow$ (ii). 

The next main result of this work concerns the boundedness and compactness of $D^n_{\varphi,u}$ when $p\leq q$ and $\omega \in \DDD$.

\begin{theorem} \label{theorem1}
Let $0<p\leq q< \infty$ and $\omega, \nu\in \DDD$. Let $\varphi$ be an analytic self-map of $\D$, $u\in A^q_\nu$, and $n \in \N \cup\{0\}$. Then there exists $\gamma=\gamma(p,\omega)>0$ such that the following statements hold:
 \begin{itemize}
 \item[\rm(i)]$D^n_{\varphi,u}: A^p_\omega \rightarrow A^q_\nu$ is bounded if and only if
   \begin{equation}\label{sup}
   \sup_{a\in \D} \int_\D|u(z)|^q\frac{(1-|a|)^{\gamma q}}{|1-\overline{a}\varphi(z)|^{(\gamma+n) q}}\frac{\nu(z)}
   {\omega(S(a))^{\frac{q}{p}}}dA(z)< \infty; 
   \end{equation}
 \item[\rm(ii)] $D^n_{\varphi,u}: A^p_\omega \rightarrow A^q_\nu$ is compact if and only if
   \begin{equation} \label{lim}
   \lim_{|a|\rightarrow 1} \int_\D|u(z)|^q\frac{(1-|a|)^{\gamma q}}{|1-\overline{a}\varphi(z)|^{(\gamma+n) q}}\frac{\nu(z)}
   {\omega(S(a))^{\frac{q}{p}}}dA(z)=0. 
   \end{equation}
 \end{itemize}
\end{theorem}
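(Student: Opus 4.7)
The plan is to reduce to Theorem \ref{Carlson D}(a) via the pushforward measure $\mu=\varphi_*(|u|^q\nu)$ and then translate the resulting Carleson-type condition into the integrals \eqref{sup} and \eqref{lim} by testing $D^n_{\varphi,u}$ on suitable kernel-type functions. Since $u\in\H(\D)$, $D^n_{\varphi,u}f$ is analytic, so the $A^q_\nu$- and $L^q_\nu$-norms agree on the image of the operator, and a change of variables gives
\begin{equation*}
\|D^n_{\varphi,u}f\|_{A^q_\nu}^q=\int_\D|f^{(n)}(w)|^q\,d\mu(w)=\|D^nf\|_{L^q_\mu}^q.
\end{equation*}
Thus $D^n_{\varphi,u}:A^p_\omega\to A^q_\nu$ is bounded (resp.\ compact) if and only if $D^n:A^p_\omega\to L^q_\mu$ is, which by Theorem \ref{Carlson D}(a) is equivalent to the Carleson-type condition that the quotient $\mu(\Delta(a,r))/[\omega(S(a))^{q/p}(1-|a|)^{nq}]$ be bounded in $a\in\D$ (resp.\ tend to $0$ as $|a|\to 1^-$).

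It remains to show that this Carleson condition is equivalent to \eqref{sup} (resp.\ \eqref{lim}). The pointwise direction is immediate: because $|1-\overline{a}\varphi(z)|\asymp 1-|a|$ whenever $\varphi(z)\in\Delta(a,r)$, we have
\begin{equation*}
\frac{\mu(\Delta(a,r))}{\omega(S(a))^{q/p}(1-|a|)^{nq}}\lesssim\int_\D|u(z)|^q\frac{(1-|a|)^{\gamma q}}{|1-\overline{a}\varphi(z)|^{(\gamma+n)q}}\frac{\nu(z)}{\omega(S(a))^{q/p}}\,dA(z),
\end{equation*}
so \eqref{sup}/\eqref{lim} implies the Carleson condition for any fixed $\gamma>0$. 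For the converse I test $D^n_{\varphi,u}$ on $F_a(z)=(1-|a|)^\gamma(1-\overline{a}z)^{-\gamma}$ with $\gamma=\gamma(p,\omega)>0$ chosen large enough that the standard $\DD$-weight estimate $\int_\D\omega(z)|1-\overline{a}z|^{-c}\,dA(z)\asymp\widehat{\omega}(a)(1-|a|)^{1-c}$ applies with $c=\gamma p$, yielding $\|F_a\|_{A^p_\omega}^p\asymp\omega(S(a))$. Since $F_a^{(n)}(z)=c_n\overline{a}^{\,n}(1-|a|)^\gamma(1-\overline{a}z)^{-\gamma-n}$, applying boundedness of $D^n_{\varphi,u}$ to $F_a$ produces \eqref{sup}, modulo the factor $|a|^{nq}$ which is bounded away from $0$ for $|a|\geq 1/2$; small $|a|$ is handled by obvious local integrability.

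The compactness half runs in parallel: $F_a/\|F_a\|_{A^p_\omega}$ is uniformly bounded in $A^p_\omega$ and tends to $0$ uniformly on compact subsets of $\D$ as $|a|\to 1^-$, so compactness of $D^n_{\varphi,u}$ forces $\|D^n_{\varphi,u}(F_a/\|F_a\|_{A^p_\omega})\|_{A^q_\nu}\to 0$, which is exactly \eqref{lim}. The converse implication follows from the pointwise estimate above combined with Theorem \ref{Carlson D}(a)(ii). The main technical point is choosing $\gamma$ so that the norm estimate for $F_a$ and the derivative kernel estimate are quantitatively compatible; once this is pinned down, the remainder is bookkeeping of standard Bergman-space estimates for $\omega\in\DDD$.
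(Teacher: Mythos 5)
Your proposal is correct and follows essentially the same route as the paper: the sufficiency of \eqref{sup}/\eqref{lim} is obtained by passing to the pushforward measure $\varphi_*(|u|^q\nu)$, using $|1-\overline{a}\varphi(z)|\asymp 1-|a|$ on $\varphi^{-1}(\Delta(a,r))$ to verify the Carleson-type condition, and invoking the embedding result (Proposition~\ref{Carleson d}, i.e.\ Theorem~\ref{Carlson D}(a)); the necessity is obtained by testing on exactly the normalized kernels $f_a(z)=(1-|a|)^\gamma(1-\overline{a}z)^{-\gamma}\omega(S(a))^{-1/p}$ with $\gamma$ large enough that $\|f_a\|_{A^p_\omega}\asymp1$, and for compactness by noting these tend to zero uniformly on compacta. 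Your handling of the $|a|^{n}$ factor and of small $|a|$ via $u\in A^q_\nu$ is a point the paper glosses over, but otherwise the two arguments coincide.
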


If $\om(z)=(1-|z|)^{\alpha}$ and $\nu(z)=(1-|z|)^\beta$ with $-1<\alpha,\beta<\infty$, then $\widehat{\om}(z)\asymp(1-|z|)^{\alpha+1}$ and $\widehat{\nu}(z)\asymp(1-|z|)^{\beta+1}$ for all $z\in\D$. The proof of Theorem~\ref{theorem1} shows that the only requirement for $\gamma=\gamma(\om,p)>0$ appearing in the statement is that 
	$$
	\int_\D\frac{\om(z)}{|1-\overline{a}z|^{\gamma p}}\,dA(z)\le C\frac{\widehat{\om}(a)}{(1-|a|)^{\gamma p-1}},\quad a\in\D,
	$$
for some constant $C=C(\om,p,\gamma)>0$. If $\om(z)=(1-|z|)^{\alpha}$, any $\gamma>\frac{\alpha+2}{p}$ is acceptable, and the choice $2\frac{\alpha+2}{p}$ converts Parts (i) and (ii) of Theorem~\ref{theorem1} to Theorems 2.6 and 2.7 in \cite{ZX1} respectively, as a simple computation shows. Therefore Theorem~\ref{theorem1} indeed generalizes \cite[Theorem 2.6 and Theorem 2.7]{ZX1} for weights in $\DDD$. 

Our last main result concerns the case in which the target space is $H^\infty$.

\begin{theorem} \label{Th}
Let $\omega \in \DDD$, $0<p<\infty$ and $n \in \N \cup\{0\}$. Let $\varphi$ be an analytic self-map of $\D$ and $u\in H^\infty(\D)$. Then $D^n_{\varphi,u}: A^p_\omega \rightarrow H^\infty$ is bounded if and only if

\begin{equation} \label{Dd}
\sup_{z\in \D} \frac{|u(z)|}{\omega(S(\varphi(z)))^{\frac{1}{p}}(1-|\varphi(z)|)^n}< \infty.
\end{equation}
Moreover, if $D^n_{\varphi,u}: A^p_\omega \rightarrow H^\infty$ is bounded, then 

\begin{equation}\label{futhermore}
\|D^n_{\varphi,u}\|\asymp \sup_{z\in \D} \frac{|u(z)|}{\omega(S(\varphi(z)))^{\frac{1}{p}}(1-|\varphi(z)|)^n}
\end{equation}

Furthermore, with the same assumptions the following are also equivalent:

\begin{itemize}
\item[(i)] $D^n_{\varphi,u}: A^p_\omega \rightarrow H^\infty$ is compact;

\item[(ii)] $\sup_{z\in \D}|\varphi(z)|<1$ or 

  \begin{equation} \label{condition2}
   \lim_{|\varphi(z)|\rightarrow 1} \frac{|u(z)|}{\omega(S(\varphi(z)))^{\frac{1}{p}}(1-|\varphi(z)|)^n}=0.
  \end{equation}
\end{itemize}
\end{theorem}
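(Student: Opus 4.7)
The first step is to establish the pointwise estimate
\[
|f^{(n)}(w)|\lesssim \frac{\|f\|_{A^p_\om}}{\om(S(w))^{1/p}(1-|w|)^n},\qquad w\in\D,\ f\in A^p_\om,
\]
which for $\om\in\DDD$ follows from the subharmonicity of $|f|^p$ on a pseudohyperbolic disc $\Delta(w,r)$, the comparability $\om(\Delta(w,r))\asymp\om(S(w))$ valid for doubling weights, and Cauchy's formula used to pass from $f$ to $f^{(n)}$. The sufficiency direction of the boundedness characterization, as well as the upper bound in \eqref{futhermore}, follow immediately by multiplying this estimate by $|u(z)|$, setting $w=\varphi(z)$ and taking the supremum over $z\in\D$.

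For the reverse inequality I would test against a normalized family of the form
\[
g_a(z)=\frac{(1-|a|^2)^{\gamma}}{(1-\overline{a} z)^{\gamma+\beta}}\cdot\frac{(1-|a|)^{\beta}}{\om(S(a))^{1/p}},
\]
with $\gamma,\beta>0$ chosen large enough so that the standard asymptotic
\[
\int_\D\frac{\om(z)}{|1-\overline{a} z|^{(\gamma+\beta)p}}\,dA(z)\asymp \frac{\widehat{\om}(a)}{(1-|a|)^{(\gamma+\beta)p-1}},\qquad a\in\D,
\]
applies for $\om\in\DDD$. Combined with $\om(S(a))\asymp(1-|a|)\widehat{\om}(a)$ this gives $\|g_a\|_{A^p_\om}\asymp 1$ uniformly in $a$, while a direct differentiation shows $|g_a^{(n)}(a)|\asymp\om(S(a))^{-1/p}(1-|a|)^{-n}$ for $|a|$ bounded away from $0$. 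Applying a bounded $D^n_{\varphi,u}$ to $g_{\varphi(z_0)}$ and bounding the $H^\infty$-norm below by evaluation at $z_0$ delivers both \eqref{Dd} and the lower bound in \eqref{futhermore}; the region where $|\varphi(z_0)|$ is small is harmless since $u\in H^\infty$ and $f^{(n)}$ is uniformly bounded on compact subsets once $\|f\|_{A^p_\om}\le 1$.

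For the compactness equivalence I would invoke the standard criterion that $D^n_{\varphi,u}:A^p_\om\to H^\infty$ is compact if and only if $\|D^n_{\varphi,u}f_k\|_{H^\infty}\to 0$ whenever $\{f_k\}$ is bounded in $A^p_\om$ and $f_k\to 0$ uniformly on compact subsets of $\D$. For (ii)$\Rightarrow$(i), the case $\sup_{z\in\D}|\varphi(z)|<1$ is immediate from Montel's theorem and $u\in H^\infty$. If instead \eqref{condition2} holds, given $\varepsilon>0$ I would select $r<1$ with $|u(z)|/(\om(S(\varphi(z)))^{1/p}(1-|\varphi(z)|)^n)<\varepsilon$ on $\{|\varphi(z)|>r\}$, split the supremum accordingly, and combine uniform convergence on $\{|\varphi(z)|\le r\}$ with the pointwise estimate from the first paragraph on the complementary set to obtain $\limsup_k\|D^n_{\varphi,u}f_k\|_{H^\infty}\lesssim\varepsilon$.

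For (i)$\Rightarrow$(ii) I argue by contrapositive: if both conditions in (ii) fail, there exists a sequence $\{z_k\}$ with $|\varphi(z_k)|\to 1$ along which the quantity in \eqref{condition2} stays above some $\delta>0$. The corresponding test functions $g_{\varphi(z_k)}$ form a bounded sequence in $A^p_\om$ that converges to $0$ uniformly on compact subsets of $\D$, yet $|(D^n_{\varphi,u}g_{\varphi(z_k)})(z_k)|\gtrsim\delta$, contradicting compactness. The main technical obstacle will be the calibration of the parameters $\gamma$ and $\beta$ so that both $\|g_a\|_{A^p_\om}\asymp 1$ and the sharp derivative lower bound at $z=a$ hold simultaneously in the doubling-weight setting; once these test functions are in place the rest is a careful assembly of standard ingredients.
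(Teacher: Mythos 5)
Your proposal is correct and follows essentially the same route as the paper: the pointwise estimate of Lemma~\ref{subharmonic} gives sufficiency and the upper bound in \eqref{futhermore}, the test functions $g_a$ are (up to the cosmetic extra parameter $\beta$) exactly the family \eqref{eq:testfunctions} used for necessity and for the contradiction argument in the compactness direction, and the splitting over $\{|\varphi(z)|\le r\}$ and its complement matches the paper's proof of Proposition~\ref{compact p}. No substantive differences to report.
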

If $\om(z)=(1-|z|)^{\alpha}$ for some $-1<\alpha<\infty$, then $\om(S(\zeta))\asymp \om(\zeta)(1-|\zeta|)^2= (1-|\zeta|)^{2+\alpha}$ for all $\zeta\in\D$. Therefore our results generalize \cite[Theorems~2.9 and 2.10]{ZX1} for weights in $\DDD$.

The rest of the paper contains the proofs of the results stated above. In the next section we go through auxiliary results. The proof of Theorem~\ref{Carlson D} is given in Section~\ref{Sec:embedding}. In Sections~\ref{Sec:boundedness} and \ref{compactness}, we characterize the boundedness and compactness of the operator $D^n_{\varphi,u}$. Finally, we show how to get the main results in Section~\ref{Sec:proof-of-main}.

Before going further couple of words about the notation used. The letter $C=C(\cdot)$ will denote an absolute constant whose value depends on the parameters indicated
in the parenthesis, and may change from one occurrence to another.
We will use the notation $a\lesssim b$ if there exists a constant
$C=C(\cdot)>0$ such that $a\le Cb$, and $a\gtrsim b$ is understood
in an analogous manner. In particular, if $a\lesssim b$ and
$a\gtrsim b$, then we write $a\asymp b$ and say that $a$ and $b$ are comparable.

\section{PRELIMINARIES}
It is known that if $\om\in\DDD$, then there exist constants $0<\alpha=\alpha(\om)\le\beta(\om)<\infty$ and $C=C(\om)\ge1$ such that
	
	\begin{equation}\label{Eq:characterization-D}
	\frac1C\left(\frac{1-r}{1-t}\right)^\alpha
	\le\frac{\widehat{\om}(r)}{\widehat{\om}(t)}
	\le C\left(\frac{1-r}{1-t}\right)^\beta,\quad 0\le r\le t<1.
	\end{equation}
In fact, this pair of inequalities characterizes the class $\DDD$ because the right hand inequality is satisfied if and only if $\om\in\DD$ by \cite[Lemma~2.1]{PelSum14} while the left hand inequality describes the class $\Dd$ in an analogous way, see \cite[(2.27)]{PelRat2020}. The chain of inequalities \eqref{Eq:characterization-D} will be frequently used in the sequel.

Another result needed is a lemma that allows us to estimate $|f^{(n)}(z)|$ sufficiently accurately when $f\in A^p_\om$ with $\om\in\DDD$. 

\begin{lemma}\label{subharmonic}
Let $\omega \in \DDD$, $0<p<\infty$ and $n\in \N \cup \{0\}$. If $f\in A^p_\omega$, then we have
\begin{equation}
|f^{(n)}(z)|\leq C\frac{\|f\|_{A^p_\omega}}{(\omega(S(z)))^{\frac{1}{p}}(1-|z|)^n},\quad z\in \D.
\end{equation}
\end{lemma}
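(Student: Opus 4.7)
The plan is to prove the case $n=0$ directly from the polar representation of the $A^p_\omega$-norm, and then reduce the case $n\ge 1$ to it via a Cauchy integral argument on a circle of radius comparable to $1-|z|$.

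For $n=0$, the polar formula
\[
\|f\|_{A^p_\omega}^p = 2\int_0^1 s\,\omega(s)\, M_p^p(s,f)\,ds,
\]
together with the classical monotonicity of $s\mapsto M_p^p(s,f)$ (consequence of the subharmonicity of $|f|^p$), yields after restricting the integral to $s\in[|z|,1)$ and factoring $M_p^p(|z|,f)$ out that
\[
M_p^p(|z|,f) \lesssim \frac{\|f\|_{A^p_\omega}^p}{\widehat{\omega}(|z|)}.
\]
A standard subharmonic mean-value estimate for $|f|^p$ on the Euclidean disc $D(z,(1-|z|)/2)\subset \D$, decomposed in polar coordinates and bounded by the annulus $\{w: |w|\in[(3|z|-1)/2,(1+|z|)/2]\}$, next gives
\[
|f(z)|^p \lesssim \frac{M_p^p\bigl(\tfrac{1+|z|}{2},f\bigr)}{1-|z|}.
\]
Combining the two bounds and using the doubling relation $\widehat{\omega}\bigl(\tfrac{1+|z|}{2}\bigr)\asymp \widehat{\omega}(|z|)$, which follows from $\omega\in\DDD\subset\DD$ via \eqref{Eq:characterization-D}, produces
\[
|f(z)|^p \lesssim \frac{\|f\|_{A^p_\omega}^p}{(1-|z|)\widehat{\omega}(|z|)} \asymp \frac{\|f\|_{A^p_\omega}^p}{\omega(S(z))},
\]
which is the claim for $n=0$. (For $|z|$ small, the annulus argument is replaced by a trivial Cauchy/mean-value estimate on a fixed compact subdisc, which is harmless since the right-hand side is bounded below.)

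For $n\ge 1$, the plan is to apply Cauchy's integral formula on the circle $C_z=\{\zeta\in\D:|\zeta-z|=(1-|z|)/2\}$ to write
\[
|f^{(n)}(z)| \lesssim (1-|z|)^{-n}\,\sup_{\zeta\in C_z}|f(\zeta)|.
\]
On $C_z$ one has $1-|\zeta|\asymp 1-|z|$, so \eqref{Eq:characterization-D} delivers $\widehat{\omega}(\zeta)\asymp\widehat{\omega}(z)$ and hence $\omega(S(\zeta))\asymp\omega(S(z))$ uniformly for $\zeta\in C_z$, because $\omega$ is radial and $\omega(S(\cdot))\asymp (1-|\cdot|)\widehat{\omega}(\cdot)$. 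Plugging in the $n=0$ bound pointwise on $C_z$ and collecting the constants yields the asserted inequality.

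The only mildly delicate step is the subharmonic passage from $M_p(\cdot,f)$ to $M_\infty$ on a disc of radius $(1-|z|)/2$; the rest is routine and rests entirely on the two-sided doubling property \eqref{Eq:characterization-D} and the polar representation of $\|\,\cdot\,\|_{A^p_\omega}$.
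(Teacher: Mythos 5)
Your argument is correct, but it follows a genuinely different route from the paper. The paper's proof applies the subharmonic mean-value inequality directly to $|f^{(n)}|^p$ on a pseudohyperbolic disc $\Delta(z,r)$, converts the resulting integral into one against $\widetilde{\omega}$ using $\omega(S(z))\asymp\widehat{\omega}(z)(1-|z|)$, and then invokes the Littlewood--Paley type equivalence of \cite[Theorem~5]{PelRat2020} to control $\int_\D|f^{(n)}(\xi)|^p(1-|\xi|)^{np}\omega(\xi)\,dA(\xi)$ by $\|f\|_{A^p_\omega}^p$; the whole estimate is carried out for the derivative $f^{(n)}$ at once. You instead first establish the case $n=0$ by elementary means --- the polar representation of the norm, the monotonicity of $M_p(\cdot,f)$, and a local subharmonic estimate on $D(z,(1-|z|)/2)$ --- and then bootstrap to $n\ge1$ via Cauchy's integral formula on a circle of comparable radius, using $1-|\zeta|\asymp1-|z|$ and \eqref{Eq:characterization-D} to keep $\omega(S(\zeta))\asymp\omega(S(z))$. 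What your approach buys is self-containedness and a weaker hypothesis: every step uses only the upper doubling inequality (the right-hand side of \eqref{Eq:characterization-D}), so your proof actually yields the lemma for all $\omega\in\DD$, not just $\omega\in\DDD$, and it avoids citing the nontrivial Littlewood--Paley theorem. What the paper's approach buys is brevity and uniformity in $n$, at the price of leaning on an external result. Your handling of the small-$|z|$ degenerate case of the annulus and of the comparability $\omega(S(\zeta))\asymp\omega(S(z))$ on the Cauchy circle is sound, so there is no gap.
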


\begin{proof}
By \eqref{eq:equivalent norms}, $1-|\xi|\asymp 1-|z|$ for all $\xi \in \Delta(z,r)$ and $\widehat{\omega}(z)(1-|z|)\asymp \omega(S(z))$ for all $z\in\D$. Therefore 
\begin{equation*}
 \begin{split}
  |f^{(n)}(z)|^p 
  &\leq\frac{C}{(1-|z|)^2} \int_{\Delta(z,r)} |f^{(n)}(\xi)|^p dA(\xi)\\
  &\leq \frac{C}{(1-|z|)^{2+np}} \int_{\Delta(z,r)} |f^{(n)}(\xi)|^p (1-|\xi|)^{np} dA(\xi)\\
  &\asymp \frac{1}{\widehat{\omega}(z)(1-|z|)^{1+np}} \int_{\Delta(z,r)} |f^{(n)}(\xi)|^p (1-|\xi|)^{np} \widetilde{\omega}(\xi) dA(\xi)\\
  &\lesssim \frac{1}{\omega(S(z))(1-|z|)^{np}} \int_\D |f^{(n)}(\xi)|^p (1-|\xi|)^{np} \omega(\xi) dA(\xi),
 \end{split}
\end{equation*}
and hence\cite[Theorem 5]{PelRat2020} yields 
\begin{equation*}
|f^{(n)}(z)|\lesssim\frac{\|f\|_{A^p_\omega}}{(\omega(S(z)))^{\frac{1}{p}}(1-|z|)^n},\quad z\in\D.
\end{equation*}
The lemma is proved.
\end{proof}

The next lemma follows by standard arguments, see, for example,~\cite[Proposition 3.11]{CCM}. We omit the details of the proof.

\begin{lemma}\label{compact}
Let $\varphi$ be an analytic self-map of $\D$, u $\in \H(\D)$ and $n\in\N\cup\{0\}$. Let $0 < p,q < \infty$, and $\omega,\nu$ in $\DDD$. Then 
$D^n_{\varphi,u}: A^p_\omega \rightarrow A^q_\nu$ is compact 
if and only if $D^n_{\varphi,u}: A^p_\omega \rightarrow A^q_\nu$ is bounded and
for any bounded sequence $(f_k)_{k\in \N}$ in $A^p_\omega$, which converges to zero uniformly on
compact subsets of $\D$, we have $\|D^n_{\varphi,u} f_k\|_{A^q_\nu}\rightarrow 0$ as $k\rightarrow \infty$.
\end{lemma}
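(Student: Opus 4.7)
The argument follows the standard template for compactness characterizations of operators between (quasi-)Banach spaces of analytic functions, and the plan is to adapt that template to the doubling-weight setting, where Lemma~\ref{subharmonic} supplies the two ingredients one needs: continuity of point evaluations on $A^q_\nu$, and the normal-family property of norm-bounded sets in $A^p_\omega$. I would treat the two implications separately.

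For the forward direction, I would assume $D^n_{\varphi,u}$ is compact (which already gives boundedness for free) and let $(f_k)\subset A^p_\omega$ be bounded and converge to zero uniformly on compact subsets of $\D$. Cauchy's integral formula yields $f_k^{(n)} \to 0$ uniformly on compact subsets as well, so $D^n_{\varphi,u} f_k \to 0$ pointwise on $\D$. Compactness of the operator provides a subsequence converging in $A^q_\nu$; Lemma~\ref{subharmonic}, applied to $A^q_\nu$ (with $n=0$), guarantees that the $A^q_\nu$-limit agrees with the pointwise limit and therefore equals zero. As every subsequence of $(D^n_{\varphi,u} f_k)$ has a further subsequence converging to $0$ in $A^q_\nu$, the full sequence does too.

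For the reverse direction, I would start with an arbitrary bounded sequence $(g_k)\subset A^p_\omega$. Lemma~\ref{subharmonic} gives the pointwise bound $|g_k(z)| \lesssim \|g_k\|_{A^p_\omega}\,\omega(S(z))^{-1/p}$, which is locally uniformly bounded on $\D$ because $\omega(S(z))$ stays away from zero on compacta. Montel's theorem then produces a subsequence $(g_{k_j})$ converging uniformly on compact subsets to some $g \in \H(\D)$, and a standard Fatou-type argument (integrate over $r\D$ and then send $r \nearrow 1$) shows $g \in A^p_\omega$. Consequently the differences $h_j := g_{k_j} - g$ form a bounded sequence in $A^p_\omega$ tending to zero uniformly on compact subsets, so by hypothesis $\|D^n_{\varphi,u} h_j\|_{A^q_\nu} \to 0$. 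By linearity, $D^n_{\varphi,u} g_{k_j} \to D^n_{\varphi,u} g$ in $A^q_\nu$, and this produces a convergent subsequence of $(D^n_{\varphi,u} g_k)$, which establishes compactness.

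No step poses a genuine obstacle here. The only mildly delicate points are that the Fatou-type step exploits the radiality of $\omega$ and that the continuity of point evaluations on $A^q_\nu$ must be drawn from Lemma~\ref{subharmonic} rather than from a Hilbert-space reproducing-kernel argument. Both are routine, which is presumably why the authors omit the details.
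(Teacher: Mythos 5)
Your proof is correct and is precisely the standard argument the paper invokes by citing \cite[Proposition~3.11]{CCM} without giving details: the forward direction via continuity of point evaluations on $A^q_\nu$ (Lemma~\ref{subharmonic} with $n=0$) plus the subsequence trick, and the reverse direction via Montel, Fatou, and the hypothesis applied to $g_{k_j}-g$. The only cosmetic quibble is that the Fatou step needs nothing beyond integrability of $\omega$, not its radiality.
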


\section{EMBEDDING THEOREMS} \label{Sec:embedding}

In this section we establish embeddings theorem of  $A^p_\omega$ into $L^q_\mu$ with $0<p,q<\infty$ and $\omega\in \DDD$. We begin with the case $0<p\leq q<\infty$. 

\begin{proposition}\label{Carleson d}
Let $0<p\leq q< \infty$ , $\omega \in \DDD$ and $n \in \N \cup\{0\}$, and let $\mu$ be a positive Borel measure on $\D$. Then there exists $r=r(\omega)\in(0,1)$ such that the following statements hold:

   \begin{itemize}
   \item[(i)] $D^{n}: A^p_\omega \rightarrow L^q_\mu$ is bounded if and only if 

     \begin{equation} \label{eq:bounded}
     \sup_{z\in \D} \frac{\mu(\Delta(z,r))}{\omega(S(z))^{\frac{q}{p}}(1-|z|)^{nq}}< \infty.
     \end{equation}
   \item[(ii)] $D^{n}: A^p_\omega \rightarrow L^q_\mu$ is compact if and only if 

       \begin{equation} \label{eq:compact}
       \lim_{|z|\rightarrow 1^-} \frac{\mu(\Delta(z,r))}{\omega(S(z))^{\frac{q}{p}}(1-|z|)^{nq}} =0.
       \end{equation}
   \end{itemize}
\end{proposition}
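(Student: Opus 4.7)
The plan is to follow the strategy of \cite[Theorem~2]{LR2020} for $n=0$ and extend it to general $n\in\N\cup\{0\}$ by combining the Littlewood-Paley type equivalence
$$
\|f\|_{A^p_\omega}^p\asymp\int_\D|f^{(n)}(z)|^p(1-|z|)^{np}\omega(z)\,dA(z)+\sum_{k=0}^{n-1}|f^{(k)}(0)|^p,
$$
which is available for $\omega\in\DDD$ by \cite[Theorem~5]{PelRat2020}, with the pointwise estimate of Lemma~\ref{subharmonic}. Note that for $\om\in\DDD$ the quantities $\widetilde\om(z)$, $\om(S(z))$, and $1-|z|$ vary by only bounded factors across any pseudohyperbolic disc, so $S(z)$ and $\Delta(z,r)$ may be used interchangeably in the denominator.

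For the necessity in (i), I would test on $F_a(z)=(1-\bar a z)^{-\gamma}$ for $\gamma=\gamma(\om,p)$ so large that $\|F_a\|_{A^p_\omega}^p\asymp \widehat\om(a)/(1-|a|)^{\gamma p-1}$, and set $G_a=F_a/\|F_a\|_{A^p_\omega}$. A direct computation together with the distortion of $|1-\bar a\xi|$ across $\Delta(a,r)$ gives $|G_a^{(n)}(\xi)|\gtrsim\omega(S(a))^{-1/p}(1-|a|)^{-n}$ for $\xi\in\Delta(a,r)$ with $r=r(\om)$ small enough. Hence boundedness of $D^n$ yields
$$
\|D^n\|^q\gtrsim\int_{\Delta(a,r)}|G_a^{(n)}|^q\,d\mu\gtrsim\frac{\mu(\Delta(a,r))}{\omega(S(a))^{q/p}(1-|a|)^{nq}},
$$
which is \eqref{eq:bounded}. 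For the necessity in (ii) I would use the same $G_a$, which tend to zero locally uniformly on $\D$ as $|a|\to 1$; compactness together with Lemma~\ref{compact} then forces $\|D^n G_a\|_{L^q_\mu}\to 0$, and the lower bound above delivers \eqref{eq:compact}.

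For the sufficiency in (i), I would fix a cover $\{\Delta(z_k,r)\}$ of $\D$ with bounded overlap whose halves $\{\Delta(z_k,r/2)\}$ are pairwise disjoint. Subharmonicity of $|f^{(n)}|^p$ on a slightly larger disc gives the mean-value bound $\sup_{\Delta(z_k,r)}|f^{(n)}|^p\lesssim(1-|z_k|)^{-2}\int_{\Delta(z_k,2r)}|f^{(n)}|^p\,dA$; raising it to the $q/p\ge 1$ power and invoking \eqref{eq:bounded} together with $\om(S(z_k))\asymp\widehat\om(z_k)(1-|z_k|)$ leads to
$$
\int_\D|f^{(n)}|^q\,d\mu\lesssim\sum_k\left(\int_{\Delta(z_k,2r)}|f^{(n)}(\xi)|^p(1-|\xi|)^{np}\widetilde\om(\xi)\,dA(\xi)\right)^{q/p}.
$$
Since $q/p\ge 1$, the inequality $\sum_k a_k^{q/p}\le\left(\sum_k a_k\right)^{q/p}$ for $a_k\ge 0$ combined with the finite overlap collapses the right-hand side into a single integral comparable to $\|f\|_{A^p_\omega}^p$ by the Littlewood-Paley equivalence above and \eqref{eq:equivalent norms}. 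Sufficiency in (ii) then follows by a standard cut-off: given $\eta>0$, pick $\rho<1$ so that the quotient in \eqref{eq:compact} is smaller than $\eta$ on $\{|z|>\rho\}$, split $\int|f_k^{(n)}|^q\,d\mu$ at $|z|=\rho$, apply the argument of (i) to $\mu|_{\{|z|>\rho\}}$ for the tail, and use $f_k^{(n)}\to 0$ uniformly on $\{|z|\le\rho\}$ (a Cauchy-formula consequence of $f_k\to 0$ locally uniformly) together with $\mu(\{|z|\le\rho\})<\infty$ (which follows from \eqref{eq:bounded} by a finite covering) for the inner part; Lemma~\ref{compact} then closes the argument.

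The main obstacle is bookkeeping rather than conceptual: one must verify that $|G_a^{(n)}|$ is bounded below on a whole pseudohyperbolic disc around $a$ and not merely at the point $a$, and that the $\mu(\Delta(z,r))$ and $\mu(S(z))$ formulations of the Carleson condition are genuinely equivalent uniformly in $z\in\D$ for $\om\in\DDD$. Both points rest on \eqref{Eq:characterization-D} and the standard distortion estimates for $\Delta(a,r)$.
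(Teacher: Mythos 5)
Your argument is correct, and the necessity half (test functions $(1-\bar a z)^{-\gamma}$ normalized in $A^p_\om$, lower bound for the $n$-th derivative on $\Delta(a,r)$, plus Lemma~\ref{compact} for the compact case) is essentially the paper's proof. The sufficiency half, however, takes a genuinely different route. The paper starts from the estimate $|f^{(n)}(z)|^p\lesssim(1-|z|)^{-2-np}\int_{\Delta(z,r)}|f(\xi)|^p\,dA(\xi)$ — bounding the derivative by the local mean of $|f|$ itself, as in \cite[Lemma~2.1]{LD1} — and then applies Minkowski's inequality in continuous form with exponent $q/p\ge1$ directly to the double integral; this collapses to $\|f\|_{A^p_{\widetilde\om}}^q$ via $\om(S(\zeta))\lesssim\widehat\om(\zeta)(1-\zeta|)$ and \eqref{eq:equivalent norms}, with no discretization and no Littlewood--Paley theorem in this direction. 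You instead bound $|f^{(n)}|$ by local means of $|f^{(n)}|$, discretize over an $r$-lattice, use $\sum_k a_k^{q/p}\le(\sum_k a_k)^{q/p}$ with bounded overlap, and then must invoke the Littlewood--Paley equivalence of \cite[Theorem~5]{PelRat2020} to return from $\int_\D|f^{(n)}|^p(1-|\xi|)^{np}\widetilde\om\,dA$ to $\|f\|_{A^p_\om}^p$. Both arguments are valid under the same constraint $q/p\ge1$; the paper's version is slightly more economical (it needs Littlewood--Paley only for the pointwise estimate of Lemma~\ref{subharmonic}, not for the embedding itself), while yours is the more standard lattice formulation and adapts more readily to situations where one wants to keep the derivative on both sides. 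Two bookkeeping points you flag but should actually carry out: the lower bound $|G_a^{(n)}(\xi)|\gtrsim\om(S(a))^{-1/p}(1-|a|)^{-n}$ carries a factor $|a|^n$, so the sup in \eqref{eq:bounded} over, say, $|z|\le 1/2$ has to be disposed of separately (it is finite because boundedness applied to $z^n/n!$ gives $\mu(\D)<\infty$); and your restriction argument for compactness needs the observation that $\Delta(z,r)\cap\{|w|>\rho\}\ne\emptyset$ forces $|z|>\rho'(\rho,r)$, so that the restricted measure has small Carleson constant. Neither is a gap, just detail to be written out.
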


\begin{proof}
When $n=0$, the statement is proved in \cite[Theorem~2]{LR2020}. Hence we may assume $n\in\N$. The necessity of the condition in (i) can be proved easily. For $a\in \D$, define the function  
    \begin{equation} \label{eq:testfunctions}
    f_a(z)=\left(\frac{1-|a|}{1-\overline{a}z}\right)^\gamma\omega(S(a))^{-\frac{1}{p}}, \quad z\in \D,
    \end{equation}
induced by $\omega$ and $0< \gamma,p< \infty$. Then \cite[Lemma 2.1]{PelSum14} implies that for all $\gamma=\gamma(\omega,p)>0$ sufficiently large we have  
$\|f_a\|_{A^p_\omega}\asymp 1$ for all $a\in \D$. By using \cite[Lemma 8]{PelRatEmb} it is easy to see that \eqref{eq:bounded} holds. 

To prove the sufficiency, we assume that \eqref{eq:bounded} holds. Recall the known estimate
    \begin{equation}
    |f^{(n)}(z)|^p \lesssim \frac{1}{(1-|z|)^{2+np}}\int_{\Delta(z,r)}|f(\xi)|^p dA(\xi), \quad z\in\D,
    \end{equation}
see, for example, \cite[Lemma 2.1]{LD1} for details. This estimate, Minkowski's inequality in continuous form (Fubini's theorem in the case $q = p$) and \eqref{eq:bounded} give
    $$
\|f^{(n)}\|_{L^q_\mu}^q \lesssim\left(\int_\D|f(\zeta)|^p\frac{\om(S(\zeta))}{(1-|\zeta|)^2}\,dA(\zeta)\right)^\frac{q}{p},\quad f\in\H(\D),
	$$
see \cite[Theorem 2 (i)]{LR2020} for details. Since $\om\in\DDD$ by the hypothesis, we may apply the right hand inequality in \eqref{Eq:characterization-D} to deduce 
   \begin{equation}\label{1111}
	\om(S(\zeta))\lesssim\widehat{\om}(\zeta)(1-|\zeta|),\quad \zeta\in\D.
	\end{equation}
It follows that $\|f^{(n)}\|_{L^q_\mu}\lesssim\|f\|_{A^p_{\widetilde{\om}}}$, and hence $\|f^{(n)}\|_{L^q_\mu}\lesssim\|f\|_{A^p_{\om}}$ for all $f\in\H(\D)$ by \eqref{eq:equivalent norms}. Thus $D^{n}$ is bounded, and (i) is proved.  

The statement concerning the compactness can be proved by following the proof of \cite[Theorem 2 (ii)]{LR2020} line by line, and therefore
we omit the derails.
\end{proof}

The next proposition deals with the case $q<p$. 

\begin{proposition} \label{Carleson d2}
Let $0<q<p<\infty$, $\omega \in \DDD$ and $n \in \N \cup\{0\}$ and let $\mu$ be a positive Borel measure on $\D$. Then the following conditions are equivalent:
\begin{itemize}
\item[(i)] $D^{n}: A^p_\omega \rightarrow L^q_\mu$ is bounded;
\item[(ii)] $D^{n}: A^p_\omega \rightarrow L^q_\mu$ is compact;
\item[(iii)] $\frac{\mu(\Delta(z,r))}{\omega(S(z))(1-|z|)^{nq}}$ belongs to $L^{\frac{p}{p-q}}_{\widetilde{\omega}}$ for some (equivalently for all) $r\in (0,1)$.
\end{itemize}
\end{proposition}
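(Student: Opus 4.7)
The plan is to close the cycle (ii)$\Rightarrow$(i)$\Rightarrow$(iii)$\Rightarrow$(ii); the first implication is trivial since compact operators are bounded. The $n=0$ case of the proposition is \cite[Theorem~2]{LRW2020}, and the natural device for reducing to it is the auxiliary measure $d\nu(z)=(1-|z|)^{-nq}\,d\mu(z)$. Because $1-|\xi|\asymp 1-|z|$ for $\xi\in\Delta(z,r)$, one has $\nu(\Delta(z,r))\asymp\mu(\Delta(z,r))/(1-|z|)^{nq}$, so condition~(iii) says precisely that $\nu$ satisfies the $n=0$ Carleson condition $\nu(\Delta(z,r))/\omega(S(z))\in L^{p/(p-q)}_{\widetilde{\omega}}$.

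For (i)$\Rightarrow$(iii) the plan is a test-function argument. Fix a pseudohyperbolic $r$-lattice $\{a_k\}\subset\D$ and take the normalized kernels $f_{a_k}$ from \eqref{eq:testfunctions} with $\gamma=\gamma(\omega,p)$ large enough that $\|f_{a_k}\|_{A^p_\omega}\asymp 1$, cf.\ \cite[Lemma~2.1]{PelSum14}. For non-negative $(\lambda_k)\in\ell^p$, forming the Rademacher-randomized sums $F_t=\sum_k r_k(t)\lambda_k f_{a_k}$, applying the boundedness hypothesis, integrating in $t$, applying Khinchine from below, and using the pointwise bound $|f_{a_j}^{(n)}(z)|\asymp(1-|a_j|)^{-n}\omega(S(a_j))^{-1/p}$ on $\Delta(a_j,r)$ where one lattice term dominates, will yield
$$\sum_j\lambda_j^q\,\frac{\mu(\Delta(a_j,r))}{\omega(S(a_j))^{q/p}(1-|a_j|)^{nq}}\lesssim\Big(\sum_k\lambda_k^p\Big)^{q/p}.$$
Duality between $\ell^{p/q}$ and $\ell^{p/(p-q)}$ forces weighted $\ell^{p/(p-q)}$-summability of $\mu(\Delta(a_j,r))/[\omega(S(a_j))(1-|a_j|)^{nq}]$, and the standard lattice-to-integral comparison then translates this into~(iii).

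For (iii)$\Rightarrow$(ii) the plan is to transport the proof of the $n=0$ case across to the measure $\nu$. Writing
$$\|f^{(n)}\|_{L^q_\mu}^q=\int_\D\bigl|(1-|z|)^n f^{(n)}(z)\bigr|^q\,d\nu(z),$$
the subharmonic estimate in the proof of Lemma~\ref{subharmonic} delivers $|(1-|z|)^n f^{(n)}(z)|^p\lesssim (1-|z|)^{-2}\int_{\Delta(z,r)}|f(\xi)|^p\,dA(\xi)$, i.e., $(1-|z|)^n f^{(n)}(z)$ obeys the same $f$-driven local bound used for $|f|$ in \cite[Theorem~2]{LRW2020}. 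Running that proof with $(1-|z|)^n f^{(n)}$ replacing $f$ and $\nu$ replacing $\mu$ yields boundedness. Compactness then follows from the standard two-piece split: for $\|f_k\|_{A^p_\omega}\le 1$ with $f_k\to 0$ uniformly on compact sets, the integral $\int_{|z|\le s}|f_k^{(n)}|^q\,d\mu$ vanishes as $k\to\infty$ because $f_k^{(n)}\to 0$ uniformly on compacta, while $\int_{|z|>s}|f_k^{(n)}|^q\,d\mu$ is small uniformly in $k$ via the boundedness estimate applied to the $L^{p/(p-q)}_{\widetilde{\omega}}$-tail of the function in~(iii), which tends to $0$ as $s\to 1^-$.

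The main obstacle is the implication (i)$\Rightarrow$(iii): the test-function/Khinchine step is routine on each scale, but the discretization and duality needed to convert the lattice bounds into the continuous $L^{p/(p-q)}_{\widetilde{\omega}}$-condition require care with the choice of lattice and of $\gamma$. The implication (iii)$\Rightarrow$(ii) should be a direct adaptation of \cite[Theorem~2]{LRW2020} once the reduction via $\nu$ and the pointwise estimate on $(1-|z|)^n f^{(n)}$ are in place.
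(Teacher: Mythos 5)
Your proposal takes essentially the same route as the paper: the cycle (ii)$\Rightarrow$(i)$\Rightarrow$(iii)$\Rightarrow$(ii), with a lattice/Khinchine test-function argument for (i)$\Rightarrow$(iii) and the local subharmonicity estimate followed by Fubini, H\"older and a two-piece split for (iii)$\Rightarrow$(ii). The one ingredient you leave implicit in (i)$\Rightarrow$(iii) is the uniform norm bound $\bigl\|\sum_k r_k(t)\lambda_k f_{a_k}\bigr\|_{A^p_\omega}\lesssim\|\lambda\|_{\ell^p}$ needed before you can "apply the boundedness hypothesis"; the paper obtains exactly this from the atomic decomposition theorem of Pel\'aez, R\"atty\"a and Sierra (cited as \cite[Theorem~1]{JJK2}) and then defers the Khinchine and lattice details to \cite[Proposition~7]{LRW2020}, just as you do.
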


\begin{proof}
If $n=0$, the statement reduces to \cite[Theorem 2]{LRW2020}, and hence we may assume that $n\in\N$. The implication
(ii) $\Rightarrow$ (i) is obvious. To prove that (i) implies (iii), assume that $D^n:A^p_\om\to L^q_\mu$ is bounded. Let $\{z_k\}$ be a $r$-lattice such that $z_k\neq0$ for all $k$. By \cite[Theorem~1]{JJK2} there exist constants $\lambda=\lambda(p,\om)>1$ and $C=C(p,\om)>0$ such that the function
	$$
	F(z)=\sum_kb_k\left(\frac{1-|z_k|}{1-\overline{z_k}z}\right)^\lambda\frac{1}{(\omega(T(z_k)))^{\frac{1}{p}}},\quad z\in\D,
	$$
belongs to $A^p_{\omega}$ and satisfies $\|F\|_{A^p_\om}\le C\|b\|_{\ell^p}$ for all $b=\{b_k\}\in\ell^p$. Since $D^n : A^p_\om \rightarrow L^q_\mu$ is bounded by the hypothesis, we deduce
	\begin{equation*}
	\begin{split}
	\|b\|_{\ell^p}^q
	&\gtrsim\|F\|_{A^p_{\om}}^q\gtrsim \int_{\D}\left|\sum_kb_k\left(\frac{(1-|z_k|)^\lambda}{(1-\overline{z_k}z)^{\lambda+n}}\right)\frac{1}{(\omega(T(z_k)))^{\frac{1}{p}}}\right|^qd\mu(z),\quad b\in\ell^p.
	\end{split}
	\end{equation*}
One may now complete this part of the proof by using Khinchine's inequality and properties of an $r$-lattice. Namely, the argument used in  \cite[Proposition 7]{LRW2020} now shows that $\frac{\mu(\Delta(z,r))}{\omega(S(z))(1-|z|)^{nq}}$ belongs to $L^{\frac{p}{p-q}}_{\widetilde{\omega}}$ for some (equivalently for all) $r\in (0,1)$.

It remains to show that (iii) implies (ii). It suffices to show that for any bounded sequence $\{f_k\}$ in $A^p_\omega$ which tends to zero uniformly on compact subsets of $\D$ as $k\rightarrow \infty$, we have $\|D^nf_k\|^q_{L^q_\mu} \rightarrow 0$ as $k\rightarrow \infty$. For simplicity, assume that $\|f_k\|_{A^p_\omega} \leq 1$ for all $n$. By \cite[Lemma 2.1]{LD1}, we have 
\begin{equation} \label{f}
|f^{(n)}(z)|^q \leq \frac{C}{\omega(S(z))(1-|z|)^{nq}}\int_{\Delta(z,r)} |f(\xi)|^q \widetilde{\omega}(\xi)dA(\xi).
\end{equation}
By Fubini's theorem, \eqref{f} and \eqref{Eq:characterization-D}, we deduce 
    $$
\|D^n f_k\|^q_{L^q_\mu} \lesssim \int_\D|f_k(\xi)|^q \frac{\nu(\Delta(\xi,r))}{\omega(S(\xi))(1-|\xi|)^{nq}}\widetilde{\omega}(\xi)dA(\xi).
    $$  
Let $\varepsilon > 0$. The hypothesis implies that there exists an $r \in (0,1)$ such that 
\begin{equation*}
\int_{\D\setminus \overline{D(0,r)}}\left(\frac{\nu(\Delta(\xi,r))}{\omega(S(\xi))(1-|\xi|)^{nq}}\right)^{\frac{p}{p-q}}\widetilde{\omega}(\xi)dA(\xi) \leq \varepsilon^{\frac{p}{p-q}}.
\end{equation*}
By the uniform convergence, we may choose $k_0 \in \N$ such that $|f_k(z)|< \varepsilon^{\frac{1}{q}}$ for all $k\geq k_0$ and $z\in \overline{D(0,r)}$. The H\"older inequality and \eqref{eq:equivalent norms} yield
  \begin{equation*}
   \begin{split}
   \|D^n f_k\|^q_{L^q_\mu} 
   &\lesssim \left(\int_{\overline{D(0,r)}}+ \int_{\D\setminus \overline{D(0,r)}}\right)|f_k(\xi)|^q \frac{\nu(\Delta(\xi,r))}{\omega(S(\xi)) 
   (1-|\xi|)^{nq}}\widetilde{\omega}(\xi)dA(\xi)\\
   &\lesssim \sup_{\xi \in \overline{D(0,r)}} \frac{\nu(\Delta(\xi,r))}{\omega(S(\xi))(1-|\xi|)^{nq}} \int_{\overline{D(0,r)}}|f_k(\xi)|^q  
   \widetilde{\omega}(\xi)dA(\xi)\\
   &\quad + \left(\int_\D |f_k(\xi)|^p \widetilde{\omega}(\xi)dA(\xi)\right)^{\frac{q}{p}} \left(\int_{\D\setminus \overline{D(0,r)}}
   \left(\frac{\nu(\Delta(\xi,r))}{\omega(S(\xi))(1-|\xi|)^{nq}}\right)^{\frac{p}{p-q}}\widetilde{\omega}(\xi)dA(\xi)\right)^{\frac{p-q}{p}}\\
   &\lesssim(1+\|f_k\|^q_{A^p_\omega})\varepsilon \lesssim \varepsilon,
   \end{split}
  \end{equation*}
and thus $D^n : A^p_\omega \rightarrow L^q_\mu$ is compact.
\end{proof}

\section{BOUNDEDNESS} \label{Sec:boundedness}

In this section, we characterize the boundedness of the generalized weighted composition operator $D^n_{\varphi,u}$ on weighted Bergman spaces.

\begin{proposition} \label{bouned pq}
Let $\varphi$ be an analytic self-map of $\D$, $u\in A^q_\nu$, and $n \in \N \cup\{0\}$. Assume $0<p\leq q< \infty$ and $\omega,\nu$ in $\DDD$. Then there exists $\gamma=\gamma(p,\omega)>0$ such that $D^n_{\varphi,u}: A^p_\omega \rightarrow A^q_\nu$ is bounded if and only if

\begin{equation}\label{sup}
 \sup_{a\in \D} \int_\D|u(z)|^q\frac{(1-|a|)^{\gamma q}}{|1-\overline{a}\varphi(z)|^{(\gamma+n) q}}\frac{\nu(z)}
 {\omega(S(a))^{\frac{q}{p}}}dA(z)< \infty.
\end{equation}
\end{proposition}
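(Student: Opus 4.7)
The plan is to reduce the problem to the embedding Proposition~\ref{Carleson d}(i) via the pushforward measure $\mu=\varphi_*(|u|^q\nu)$. The change of variables formula gives
\begin{equation*}
\|D^n_{\varphi,u}f\|_{A^q_\nu}^q=\int_\D|f^{(n)}(w)|^q\,d\mu(w),\quad f\in\H(\D),
\end{equation*}
so $D^n_{\varphi,u}:A^p_\omega\to A^q_\nu$ is bounded if and only if $D^n:A^p_\omega\to L^q_\mu$ is bounded, and by Proposition~\ref{Carleson d}(i) this happens exactly when the Carleson-type estimate $\mu(\Delta(a,r))\lesssim\omega(S(a))^{q/p}(1-|a|)^{nq}$ holds uniformly in $a\in\D$ for some (equivalently every) $r\in(0,1)$. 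It therefore suffices to show this Carleson condition is equivalent to \eqref{sup}.

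For the necessity of \eqref{sup}, I will test the boundedness of $D^n_{\varphi,u}$ against the family $f_a$ from \eqref{eq:testfunctions}, which satisfies $\|f_a\|_{A^p_\omega}\asymp1$ uniformly in $a\in\D$ for $\gamma=\gamma(p,\omega)$ sufficiently large by \cite[Lemma~2.1]{PelSum14}. A direct differentiation yields
\begin{equation*}
f_a^{(n)}(z)=c_{n,\gamma}\,\overline{a}^n\,\frac{(1-|a|)^\gamma}{(1-\overline{a}z)^{\gamma+n}}\,\omega(S(a))^{-\frac{1}{p}}
\end{equation*}
with a nonzero constant $c_{n,\gamma}$; substituting into $\|D^n_{\varphi,u}f_a\|_{A^q_\nu}^q\lesssim\|D^n_{\varphi,u}\|^q$ and exploiting $|a|\asymp1$ for $|a|\ge\frac{1}{2}$ gives \eqref{sup} on that range, while for $|a|\le\frac{1}{2}$ both $\omega(S(a))\asymp1$ and the kernel are bounded, so the integral in \eqref{sup} is controlled by a constant multiple of $\|u\|_{A^q_\nu}^q$. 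For the sufficiency, I will invoke the standard pointwise estimate $|1-\overline{a}w|\asymp1-|a|$ for $w\in\Delta(a,r)$ with $|a|\ge\frac{1}{2}$, restrict the integration in \eqref{sup} to $\varphi^{-1}(\Delta(a,r))$, and recover the lower bound
\begin{equation*}
\int_\D|u(z)|^q\frac{(1-|a|)^{\gamma q}}{|1-\overline{a}\varphi(z)|^{(\gamma+n)q}}\frac{\nu(z)}{\omega(S(a))^{\frac{q}{p}}}\,dA(z)\gtrsim\frac{\mu(\Delta(a,r))}{\omega(S(a))^{\frac{q}{p}}(1-|a|)^{nq}};
\end{equation*}
the remaining range $|a|<\frac{1}{2}$ is trivial since $\mu(\D)=\|u\|_{A^q_\nu}^q<\infty$ and $\omega(S(a))\asymp1$ there.

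The principal technical input is thus Proposition~\ref{Carleson d}(i); beyond that the argument is essentially a test-function/Carleson-condition duality, and the main care lies in book-keeping the constants and in verifying that the parameter $\gamma=\gamma(p,\omega)$ which emerges from the necessity step is precisely the one demanded in the statement, as it is determined solely by the requirement $\|f_a\|_{A^p_\omega}\asymp1$ and is in particular independent of $\nu$.
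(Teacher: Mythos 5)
Your proposal is correct and follows essentially the same route as the paper: necessity via the test functions $f_a$ of \eqref{eq:testfunctions}, and sufficiency by passing to the pushforward measure $\varphi_*(|u|^q\nu)$, verifying the Carleson-type condition from the estimate $|1-\overline{a}\varphi(z)|\asymp1-|a|$ on $\varphi^{-1}(\Delta(a,r))$, and invoking Proposition~\ref{Carleson d}(i) together with the change-of-variables identity. Your explicit treatment of the range $|a|\le\frac12$ (where the factor $|a|^n$ from differentiating $f_a$ degenerates and one must instead use $u\in A^q_\nu$) is a small point the paper's write-up glosses over, but it is not a different argument.
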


\begin{proof}
First assume that $D^n_{\varphi,u}: A^p_\omega \rightarrow A^q_\nu$ is bounded. Then there exists a constant $C=C(\gamma,p,n)$ such that
\begin{equation*}
  \begin{split}
  \|D^n_{\varphi,u}f_a\|^q_{A^q_\nu}
  &=(C|a|^n)^q \int_\D|u(z)|^q\frac{(1-|a|)^{\gamma q}}{|1-\overline{a}\varphi(z)|^{(\gamma+n) q}} \frac{\nu(z)}  
  {\omega(S(a))^{\frac{q}{p}}}dA(z)\\
  &\leq C \|f_{a}\|^q_{A^p_\omega}<\infty,
  \end{split}
\end{equation*}
where $f_a$ defined as in \eqref{eq:testfunctions}. It follows that~\eqref{sup} is satisfied.

Conversely, we assume that~\eqref{sup} holds. For each $a\in \D$ and $r>0$, the eatimate $|1-\overline{a}\varphi(z)|\asymp 1-|a|$ for $z\in \varphi^{-1}\De(a,r)$ yields
	\begin{equation*}
  \begin{split}
  \infty 
   &>\int_\D|u(z)|^q\frac{(1-|a|)^{\gamma q}}{|1-\overline{a}\varphi(z)|^{(\gamma+n) q}} \frac{\nu(z)}  
  {\omega(S(a))^{\frac{q}{p}}}dA(z)\\
  &\gtrsim \int_{\varphi^{-1}(\Delta(a,r))} |u(z)|^q\frac{(1-|a|)^{\gamma q}}{|1-\overline{a}\varphi(z)|^{(\gamma+n) q}} \frac{\nu(z)}  
  {\omega(S(a))^{\frac{q}{p}}}dA(z)
	\gtrsim \frac{\varphi_*(|u|^q\nu)(\Delta(a,r))}{\omega(S(a))^{\frac{q}{p}}(1-|a|)^{nq}}, 
  \end{split}
\end{equation*}
and hence the function
\begin{equation*}
 a\mapsto\frac{\varphi_*(|u|^q\nu)(\Delta(a,r))}{\omega(S(a))^{\frac{q}{p}}(1-|a|)^{nq}} 
\end{equation*}
belongs to $L^{\infty}$ for any fixed $r\in (0,1)$. By Proposition~\ref{Carleson d}, we have $\|f^{(n)}\|_{L^q_{\varphi_*(|u|^q\nu)}} \lesssim \|f\|_{A^p_\omega}$ for all $f\in A^p_\omega$. By the measure theoretic change of variable \cite[Section 39]{PM} it follows that 
\begin{equation*}
 \begin{split}
  \|D^n_{\varphi,u}f\|^q_{L^q_\nu}
  &= \int_\D |f^{(n)}(\varphi(z))|^q |u(z)|^q \nu(z) dA(z)\\
  &=\int_\D |f^{(n)}(z)|^q d \varphi_*(|u|^q\nu)(z)\\
  &=\|f^{(n)}\|^q_{L^q_{\varphi_*(|u|^q\nu)}} \lesssim \|f\|^q_{A^p_\omega}.
  \end{split}
\end{equation*}
Thus $D^n_{\varphi,u}: A^p_\omega \rightarrow A^q_\nu$ is bounded. 
\end{proof}

Next, we consider the case $0<p< \infty$ and $q=\infty$. Then a necessary condition for the boundness of the operator $D^n_{\varphi,u}: A^p_\omega \rightarrow H^\infty$ is $u\in H^\infty$. 

\begin{proposition} \label{bounded p}
Let $\omega \in \DDD$, $0<p<\infty$ and $n \in \N \cup\{0\}$. Let $\varphi$ be an analytic self-map of $\D$ and $u\in H^\infty$. Then $D^n_{\varphi,u}: A^p_\omega \rightarrow \H^\infty$ is bounded if and only if
\begin{equation} \label{Dd}
\sup_{z\in \D} \frac{|u(z)|}{\omega(S(\varphi(z)))^{\frac{1}{p}}(1-|\varphi(z)|)^n}< \infty.
\end{equation}
Furthermore, if $D^n_{\varphi,u}: A^p_\omega \rightarrow \H^\infty$ is bounded, then 
	\begin{equation}\label{futhermore}
	\|D^n_{\varphi,u}\|\asymp \sup_{z\in \D} \frac{|u(z)|}{\omega(S(\varphi(z)))^{\frac{1}{p}}(1-|\varphi(z)|)^n}
	\end{equation}
\end{proposition}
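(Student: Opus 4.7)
For sufficiency, let $M$ denote the supremum in~\eqref{Dd} and assume $M<\infty$. Applying Lemma~\ref{subharmonic} at the point $\varphi(z)$ gives, for any $f\in A^p_\omega$ and uniformly in $z\in\D$,
\[
|u(z)f^{(n)}(\varphi(z))|\lesssim \frac{|u(z)|\,\|f\|_{A^p_\omega}}{\omega(S(\varphi(z)))^{1/p}(1-|\varphi(z)|)^n}\le M\,\|f\|_{A^p_\omega}.
\]
Taking the supremum in $z$ yields $\|D^n_{\varphi,u}f\|_{H^\infty}\lesssim M\,\|f\|_{A^p_\omega}$, so $D^n_{\varphi,u}:A^p_\omega\to H^\infty$ is bounded with $\|D^n_{\varphi,u}\|\lesssim M$.

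For necessity, I would use the test functions $f_a$ from~\eqref{eq:testfunctions}. By \cite[Lemma 2.1]{PelSum14} one has $\|f_a\|_{A^p_\omega}\asymp 1$ uniformly in $a\in\D$ once $\gamma=\gamma(p,\omega)>0$ is chosen large enough. A direct differentiation yields $|f_a^{(n)}(z)|\asymp |a|^n(1-|a|)^{\gamma}|1-\overline{a}z|^{-(\gamma+n)}\omega(S(a))^{-1/p}$, so setting $z=a=\varphi(\zeta)$ and using $1-|\varphi(\zeta)|^2\asymp 1-|\varphi(\zeta)|$ gives
\[
|f_{\varphi(\zeta)}^{(n)}(\varphi(\zeta))|\asymp \frac{|\varphi(\zeta)|^n}{(1-|\varphi(\zeta)|)^n\,\omega(S(\varphi(\zeta)))^{1/p}}.
\]
Combining this with the obvious lower bound $\|D^n_{\varphi,u}f_{\varphi(\zeta)}\|_{H^\infty}\ge |u(\zeta)|\,|f_{\varphi(\zeta)}^{(n)}(\varphi(\zeta))|$ and $\|f_{\varphi(\zeta)}\|_{A^p_\omega}\asymp 1$ produces
\[
\frac{|u(\zeta)|\,|\varphi(\zeta)|^n}{(1-|\varphi(\zeta)|)^n\,\omega(S(\varphi(\zeta)))^{1/p}}\lesssim \|D^n_{\varphi,u}\|,\qquad \zeta\in\D,
\]
which already matches~\eqref{Dd} on the set $\{|\varphi(\zeta)|\ge 1/2\}$.

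The one subtlety I would flag is the remaining regime $|\varphi(\zeta)|<1/2$, where the factor $|\varphi(\zeta)|^n$ in the previous display degenerates when $n\ge 1$. On this set $\omega(S(\varphi(\zeta)))$ and $(1-|\varphi(\zeta)|)^n$ are pinched between positive constants depending only on $\omega$ and $n$, so the desired estimate reduces to $|u(\zeta)|\lesssim \|D^n_{\varphi,u}\|$. I would obtain this by testing the operator on the normalized monomial $g(z)=z^n/n!$, whose $A^p_\omega$-norm is a finite constant depending only on $p,\omega,n$: since $g^{(n)}\equiv 1$ one has $D^n_{\varphi,u}g=u$, and hence $\|u\|_{H^\infty}\lesssim \|D^n_{\varphi,u}\|$. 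Putting the two regimes together gives $M\lesssim \|D^n_{\varphi,u}\|$, which combined with the sufficiency produces the two-sided comparison~\eqref{futhermore}. The remainder follows the standard ``pointwise growth estimate plus reproducing-type test function'' template, with the low-modulus correction being the only delicate point.
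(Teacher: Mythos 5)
Your proof is correct and follows essentially the same route as the paper: the upper bound via Lemma~\ref{subharmonic} applied at $\varphi(z)$, and the lower bound via the test functions $f_a$ of \eqref{eq:testfunctions} with $a=\varphi(\zeta)$ evaluated at $z=\zeta$. The one point where you go beyond the paper is the regime $|\varphi(\zeta)|<1/2$ with $n\ge1$: the paper passes from \eqref{cpn} directly to the conclusion, silently discarding the degenerate factor $|\varphi(\xi)|^n$, whereas your additional test on $g(z)=z^n/n!$ (which gives $D^n_{\varphi,u}g=u$ and hence $\|u\|_{H^\infty}\lesssim\|D^n_{\varphi,u}\|$) supplies exactly the estimate needed on that set to justify the two-sided comparison \eqref{futhermore}; this is a legitimate and welcome patch rather than a different method.
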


\begin{proof}
First, assume that $D^n_{\varphi,u}: A^p_\omega \rightarrow \H^\infty$ is bounded. Consider the function $f_\xi$ with $a=\varphi(\xi)$ defined in \eqref{eq:testfunctions}. Then
\begin{equation*}
\|D^n_{\varphi,u}f_\xi\|_\infty \leq \|D^n_{\varphi,u}\|\|f_\xi\|_{A^p_\omega}\leq C \|D^n_{\varphi,u}\|, 
\end{equation*}
that is, for any $z\in \D$, we have 

\begin{equation*}
|u(z)||f^{(n)}_\xi(\varphi(z))|\leq C \|D^n_{\varphi,u}\|.
\end{equation*}
In particular, letting $z = \xi$, we have

\begin{equation}\label{cpn}
C(\gamma,p,n)\frac{|u(\xi)||\varphi(\xi)|^n}{\omega(S(\varphi(\xi)))^{\frac{1}{p}}(1-|\varphi(\xi)|)^n}\leq C \|D^n_{\varphi,u}\|.
\end{equation}
Therefore
\begin{equation*}
\sup_{\xi\in \D} \frac{|u(\xi)|}{\omega(S(\varphi(\xi)))^{\frac{1}{p}}(1-|\varphi(\xi)|)^n}< \infty.
\end{equation*}

Conversely, assume that \eqref{Dd} holds. Then for $f\in A^p_\omega$, by Lemma~\ref{subharmonic}, we have 
\begin{equation}\label{HD}
  \begin{split}
  |(D^n_{\varphi,u}f)(z)|
  &=|u(z)||f^{(n)}(\varphi(z))|\\
  &\leq C\frac{|u(z)|\|f\|_{A^p_\omega}}{(\omega(S(\varphi(z))))^{\frac{1}{p}}(1-|\varphi(z)|)^n},\quad z\in\D.
  \end{split}
\end{equation}
Thus $D^n_{\varphi,u}: A^p_\omega \rightarrow H^\infty$ is bounded.

By combining \eqref{cpn} and \eqref{HD} we obtain \eqref{futhermore}. This completes the proof.
\end{proof}

\section{COMPACTNESS}\label{compactness}

In this section we will characterize the compactness of the generalized weighted composition operator $D^n_{\varphi,u}$ on weighted Bergman spaces.

\begin{proposition}\label{compact pq}
Let $\varphi$ be an analytic self-map of $\D$, $u\in A^q_\nu$, and $n \in \N \cup\{0\}$. Assume $0<p\leq q< \infty$ and $\omega,\nu \in \DDD$. Then there exists $\gamma=\gamma(p,\omega)>0$ such that $D^n_{\varphi,u}: A^p_\omega \rightarrow A^q_\nu$ is compact if and only if

\begin{equation} \label{lim}
 \lim_{|a|\rightarrow 1} \int_\D|u(z)|^q\frac{(1-|a|)^{\gamma q}}{|1-\overline{a}\varphi(z)|^{(\gamma+n) q}}\frac{\nu(z)}
 {\omega(S(a))^{\frac{q}{p}}}dA(z)=0.
\end{equation}
\end{proposition}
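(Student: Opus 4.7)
The plan is to mirror the proof of Proposition~\ref{bouned pq}, with the little-oh half Proposition~\ref{Carleson d}(ii) replacing the sup-version. Necessity will come from plugging the normalized kernels from~\eqref{eq:testfunctions} into Lemma~\ref{compact}, while sufficiency will be obtained by transporting the problem, via the pushforward $\mu=\varphi_*(|u|^q\nu)$, to the compactness of the pure differentiation operator $D^n:A^p_\omega\to L^q_\mu$.

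For necessity, fix $\gamma=\gamma(p,\omega)$ large enough that $\|f_a\|_{A^p_\omega}\asymp 1$ uniformly in $a$. A short calculation using~\eqref{Eq:characterization-D} shows that $\omega(S(a))^{-1/p}$ grows only polynomially in $(1-|a|)^{-1}$, whereas the prefactor $(1-|a|)^\gamma$ forces $f_a$ uniformly to zero on each compact subset of $\D$ when $|a|\to 1^-$, provided $\gamma$ is taken sufficiently large. Compactness of $D^n_{\varphi,u}$ together with Lemma~\ref{compact} then gives $\|D^n_{\varphi,u}f_a\|_{A^q_\nu}\to 0$, and the direct computation of this norm carried out in Proposition~\ref{bouned pq} identifies it, up to the factor $C(\gamma,p,n)|a|^n$, with the integral in~\eqref{lim}.

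For sufficiency, note first that~\eqref{lim} implies~\eqref{sup}, so Proposition~\ref{bouned pq} already delivers boundedness of $D^n_{\varphi,u}$. Restricting the integral in~\eqref{lim} to $\varphi^{-1}(\Delta(a,r))$ and using $|1-\overline{a}\varphi(z)|\asymp 1-|a|$ there, exactly as in the boundedness proof, yields
\begin{equation*}
\frac{\varphi_*(|u|^q\nu)(\Delta(a,r))}{\omega(S(a))^{q/p}(1-|a|)^{nq}} \lesssim \int_\D |u(z)|^q\frac{(1-|a|)^{\gamma q}}{|1-\overline{a}\varphi(z)|^{(\gamma+n)q}}\frac{\nu(z)}{\omega(S(a))^{q/p}}\,dA(z),
\end{equation*}
so $\mu=\varphi_*(|u|^q\nu)$ satisfies the little-oh condition~\eqref{eq:compact}. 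Proposition~\ref{Carleson d}(ii) then gives that $D^n:A^p_\omega\to L^q_\mu$ is compact, meaning $\|f_k^{(n)}\|_{L^q_\mu}\to 0$ whenever $\{f_k\}$ is bounded in $A^p_\omega$ and tends to zero uniformly on compacta. Combining this with the change-of-variables identity $\|D^n_{\varphi,u}f\|_{A^q_\nu}=\|f^{(n)}\|_{L^q_\mu}$ from Proposition~\ref{bouned pq} and with Lemma~\ref{compact} transfers compactness to $D^n_{\varphi,u}:A^p_\omega\to A^q_\nu$. The only routine but nontrivial input beyond the boundedness argument is the uniform convergence of the kernels $f_a$ on compacta; every other ingredient is a direct repackaging of estimates from Proposition~\ref{bouned pq} and the compactness half of Proposition~\ref{Carleson d}, so no deeper obstacle is expected.
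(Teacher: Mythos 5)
Your proposal is correct and follows essentially the same route as the paper: necessity by testing against the normalized kernels $f_a$ and invoking Lemma~\ref{compact}, and sufficiency by restricting the integral to $\varphi^{-1}(\Delta(a,r))$ to verify the little-oh Carleson condition for $\mu=\varphi_*(|u|^q\nu)$ and then applying Proposition~\ref{Carleson d}(ii) together with the change-of-variables identity. Your added remarks (checking that $f_a\to0$ uniformly on compacta via the polynomial lower bound on $\omega(S(a))$, and noting that the limit condition plus $u\in A^q_\nu$ yields the sup condition and hence boundedness) merely make explicit details the paper treats as obvious.
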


\begin{proof}
First, we assume that $D^n_{\varphi,u}$ is compact. Consider the function $f_a$ defined in \eqref{eq:testfunctions}. It is obviously that $f_a$ tends to zero uniformly on compact subsets of $\D$ as $|a|\rightarrow 1^-$. Then \cite[Lemma 2.1]{PelSum14} implies that we have $\|f_{a}\|_{A^p_\omega}\asymp 1$ for all $a\in \D$ if $\gamma=\gamma(\omega,p)>0$ sufficient large.  By Lemma~\ref{compact}, we have $\|D^n_{\varphi,u}f_{a}\|_{A^q_\nu}\rightarrow 0$ as $|a|\rightarrow 1^-$. Hence

\begin{equation*}
  \begin{aligned}
  &\lim_{|a|\rightarrow 1^-}(C(\lambda,p,n)|a|^n)^q \int_\D|u(z)|^q\frac{(1-|a|)^{\gamma q}}{|1-\overline{a}\varphi(z)|^{(\gamma+n) q}} \frac{\nu(z)}{\omega(S(a))^{\frac{q}{p}}}dA(z)\\
  &=\lim_{|a|\rightarrow 1^-}\|D^n_{\varphi,u}f_{a}\|^q_{A^q_\nu}=0,
  \end{aligned}
\end{equation*}
and \eqref{lim} follows.

Conversely, assume that \eqref{lim} holds. By Proposition~\ref{Carleson d}, it suffices to show that
   $$
   \lim_{|a|\rightarrow 1^-}\frac{\varphi_*(|u|^q\nu)(\Delta(a,r))}{\omega(S(a))^{\frac{q}{p}}(1-|a|)^{nq}}= 0.
   $$
By the hypothesis and the estimate $|1-\overline{a}\varphi(z)|\asymp 1-|a|$ for $z\in \varphi^{-1}\De(a,r)$ we have
\begin{equation}
  \begin{split}
  0
  &=\lim_{|a|\rightarrow 1} \int_\D|u(z)|^q\frac{(1-|a|)^{\gamma q}}{|1-\overline{a}\varphi(z)|^{(\gamma+n) q}}\frac{\nu(z)}{\omega(S(a))^{\frac{q}{p}}}dA(z)\\
   &\gtrsim \lim_{|a|\rightarrow 1}\int_{\varphi^{-1}(\Delta(a,r))} |u(z)|^q\frac{(1-|a|)^{\gamma q}}{|1-\overline{a}\varphi(z)|^{(\gamma+n) q}} \frac{\nu(z)}  
  {\omega(S(a))^{\frac{q}{p}}}dA(z)\\
  &\gtrsim \lim_{|a|\rightarrow 1}\frac{\varphi_*(|u|^q\nu)(\Delta(a,r))}{\omega(S(a))^{\frac{q}{p}}(1-|a|)^{nq}}.
  \end{split}
\end{equation}
Thus $D^n_{\varphi,u}$ is compact. The proof is complete.
\end{proof}

Now we consider the compactness of the operator $D^n_{\varphi,u}$ acting from $A^p_\omega$ to $H^\infty(\D)$. 

\begin{proposition} \label{compact p}
Let $\omega \in \DDD$, $0<p<\infty$ and $n \in \N \cup\{0\}$. Let $\varphi$ be an analytic self-map of $\D$ and $u\in \H^\infty(\D)$. Then the following statements are equivalent:
\begin{itemize}
\item[(i)] $D^n_{\varphi,u}: A^p_\omega \rightarrow H^\infty$ is compact;
\item[(ii)] $\sup_{z\in \D}|\varphi(z)|<1$ or 
  \begin{equation} \label{condition2}
   \lim_{|\varphi(z)|\rightarrow 1} \frac{|u(z)|}{\omega(S(\varphi(z)))^{\frac{1}{p}}(1-|\varphi(z)|)^n}=0.
  \end{equation}
\end{itemize}
\end{proposition}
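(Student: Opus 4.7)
The plan is to handle the two implications separately, using throughout the standard sequential criterion: $D^n_{\varphi,u}\colon A^p_\omega\to H^\infty$ is compact if and only if it is bounded and every bounded sequence $\{f_k\}\subset A^p_\omega$ converging to zero uniformly on compact subsets of $\D$ satisfies $\|D^n_{\varphi,u}f_k\|_\infty\to 0$. This is the direct analogue of Lemma~\ref{compact} with target $H^\infty$ in place of $A^q_\nu$, and is proved identically via the normality of bounded families in $A^p_\omega$, which is guaranteed by Lemma~\ref{subharmonic}.

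For (ii)$\Rightarrow$(i), let $\{f_k\}$ be a bounded sequence in $A^p_\omega$ converging to zero uniformly on compacta. If $\sup_{z\in\D}|\varphi(z)|=\rho<1$, Cauchy's formula gives $f_k^{(n)}\to 0$ uniformly on $\overline{D(0,\rho)}$, and the inclusion $u\in H^\infty$ yields $\|D^n_{\varphi,u}f_k\|_\infty\to 0$ at once. Otherwise \eqref{condition2} is in force, and Lemma~\ref{subharmonic} supplies
\begin{equation*}
|u(z)f_k^{(n)}(\varphi(z))|\lesssim \frac{|u(z)|\,\|f_k\|_{A^p_\omega}}{\omega(S(\varphi(z)))^{1/p}(1-|\varphi(z)|)^n}, \quad z\in\D.
\end{equation*}
Given $\varepsilon>0$, choose $\delta\in(0,1)$ via \eqref{condition2} so that the $\|f_k\|$-free part of the right-hand side is at most $\varepsilon$ whenever $|\varphi(z)|>\delta$, which controls the supremum over $\{z:|\varphi(z)|>\delta\}$ uniformly in $k$. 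On the complementary set $\{z:|\varphi(z)|\leq\delta\}$, Cauchy's formula combined with the uniform convergence of $\{f_k\}$ on compacta forces $f_k^{(n)}\to 0$ uniformly on $\overline{D(0,\delta)}$, and boundedness of $u$ completes the argument.

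For (i)$\Rightarrow$(ii), I argue by contraposition: assume $\sup_{z\in\D}|\varphi(z)|=1$ and that \eqref{condition2} fails. Then some $\varepsilon_0>0$ and some sequence $\{z_k\}\subset\D$ satisfy $|\varphi(z_k)|\to 1$ and $\frac{|u(z_k)|}{\omega(S(\varphi(z_k)))^{1/p}(1-|\varphi(z_k)|)^n}\geq\varepsilon_0$. Set $a_k=\varphi(z_k)$ and take $\gamma=\gamma(\omega,p)$ large enough that the test functions $f_{a_k}$ of \eqref{eq:testfunctions} satisfy $\|f_{a_k}\|_{A^p_\omega}\asymp 1$ by \cite[Lemma~2.1]{PelSum14}. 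Since $|a_k|\to 1$, these tend to zero uniformly on compacta, so the sequential criterion forces $\|D^n_{\varphi,u}f_{a_k}\|_\infty\to 0$. Direct differentiation of $f_a$, together with $1-|a_k|^2\asymp 1-|a_k|$, yields
\begin{equation*}
|f_{a_k}^{(n)}(a_k)|\asymp |a_k|^n(1-|a_k|)^{-n}\omega(S(a_k))^{-1/p},
\end{equation*}
hence $|D^n_{\varphi,u}f_{a_k}(z_k)|=|u(z_k)||f_{a_k}^{(n)}(a_k)|\gtrsim |a_k|^n\,\varepsilon_0$, contradicting $\|D^n_{\varphi,u}f_{a_k}\|_\infty\to 0$ as $k\to\infty$.

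The only delicate point is the formulation of the sequential compactness criterion for the $H^\infty$-valued operator; beyond that, the proof is a direct adaptation of the reasoning behind Proposition~\ref{bounded p}, with the single evaluation at $z=\xi$ used there replaced here by an asymptotic evaluation along the sequence $\{z_k\}$.
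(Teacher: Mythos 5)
Your proof is correct and follows essentially the same route as the paper: the necessity is obtained by testing on $f_{a_k}$ with $a_k=\varphi(z_k)$ and evaluating the $n$-th derivative at $a_k$, and the sufficiency by splitting $\D$ into $\{|\varphi(z)|\le\delta\}$ and $\{|\varphi(z)|>\delta\}$ and using Lemma~\ref{subharmonic} together with Cauchy's estimates. Your explicit separation of the two alternatives in (ii) and your remark that the sequential compactness criterion must be restated for the target $H^\infty$ are minor expository refinements of the paper's argument, not a different method.
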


\begin{proof}
First, we show that (i) implies (ii). Assume that $\sup_{\eta\in \D}|\varphi(\eta)|=1$ and 
   \begin{equation*}
    \lim_{|\varphi(\eta)|\rightarrow 1} \frac{|u(\eta)|}{\omega(S(\varphi(\eta)))^{\frac{1}{p}}(1-|\varphi(\eta)|)^n}\neq 0.
   \end{equation*}
Then there exist $\varepsilon >0$ and a sequence $\{\eta_k\}$ in $\D$ such that $\lim_{k\rightarrow\infty}|\varphi(\eta_k)|=1$  and 
  \begin{equation*}
   \frac{|u(\eta_k)|}{\omega(S(\varphi(\eta_k)))^{\frac{1}{p}}(1-|\varphi(\eta_k)|)^n}> \varepsilon
  \end{equation*}
for all $k\in \N$. Consider the function $f_{\eta_k}$ with $a=\varphi(\eta_k)$ defined in \eqref{eq:testfunctions}. Then $f_{\eta_k} \in A^p_\omega$, $\|f_{\eta_k}\|_{A^p_\omega}\leq C$ and $f_{\eta_k} \rightarrow 0$ uniformly on compact subsets of $\D$ as $|\varphi({\eta_k})|\rightarrow 1^-$. Since $D^n_{\varphi,u}$ is compact, we have $\|D^n_{\varphi,u} f_{\eta_k}\|_\infty \rightarrow 0$ as $|\varphi({\eta_k})|\rightarrow 1^-$ by Lemma \ref{compact}. On the other hand 

  \begin{equation}
  \|D^n_{\varphi,u} f_{\eta_k}\|_\infty = \sup_{z\in\D}|u(z)||f^{(n)}_{\eta_k}(\varphi(z))|\gtrsim \frac{|u({\eta_k})|}{\omega(S(\varphi({\eta_k})))^{\frac{1}{p}}(1-|\varphi(\eta_k)|)^n}> \varepsilon.
  \end{equation}
This is a contradiction.

Next, we show that (ii) implies (i). It suffices to show that for any norm bounded sequence $\{f_j\}$ in $A^p_\omega$ which tends to zero uniformly on compact subsets of $\D$ as $j\rightarrow \infty$, we have $\|D^n_{\varphi,u}f_j\|_\infty \rightarrow 0$ as $j\rightarrow \infty$.  First, we assume that $\sup_{z\in \D}|\varphi(z)|<1$ and \eqref{condition2} holds. \eqref{condition2} and $u\in \H^\infty(\D)$ imply
  \begin{equation}
   \sup_{z\in \D} \frac{|u(z)|}{\omega(S(\varphi(z)))^{\frac{1}{p}}(1-|\varphi(z)|)^n}< \infty.
  \end{equation}
By Proposition~\ref{bounded p} it follows that $D^n_{\varphi,u}: A^p_\omega \rightarrow \H^\infty$ is bounded. Therefore, for any $\varepsilon > 0$, there exists $r \in (0,1)$ such that
   \begin{equation} 
   \frac{|u(z)|}{\omega(S(\varphi(z)))^{\frac{1}{p}}(1-|\varphi(z)|)^n}< \varepsilon,
   \end{equation}
provided $r<|\varphi(z)|<1$. For $z\in \D$ with $r<|\varphi(z)|<1$, we therefore have 
   \begin{equation}
    |u(z)||f^{(n)}_j(\varphi(z))|\leq \frac{|u(z)|\|f_j\|_{A^p_\omega}}{(\omega(S(\varphi(z))))^{\frac{1}{p}}(1-|\varphi(z)|)^n}\leq C \varepsilon.
   \end{equation}
By Cauchy's estimate, we see that $f^{(n)}_j\rightarrow 0$ uniformly on compact subsets of $\D$ as $j\rightarrow \infty$. Hence there exists a  $j_0=j_0(\varepsilon) \in \N$ such that $|f^{(n)}_j(\varphi(z))|< \varepsilon$ for all $j\geq j_0$ and $|\varphi(z)|\leq r$. Therefore, for all $j\geq j_0$, we have 
    \begin{equation*}
    \lim_{j\rightarrow \infty}\|D^n_{\varphi,u} f_j\|_\infty=\lim_{j\rightarrow \infty} \sup_{z\in \D} |u(z)|f_j^{(n)}(\varphi(z))| =0.
    \end{equation*}
This finishes the proof.
\end{proof}

\section{PROOFS OF MAIN RESULTS} \label{Sec:proof-of-main}
With the propositions proved in the previous two sections we can now easily obtain the main results stated in the introduction.

\medskip
\noindent\emph{Proof of} Theorem~\ref{Carleson DD}. By the measure theoretic change of variable \cite[Section 39]{PM}, it follows that $\|D^n_{\varphi,u}f\|^q_{L^q_\nu} = \|f^{(n)}\|^q_{L^q_{\varphi_*(|u|^q\nu)}}$ for each $f\in A^p_\omega$. Therefore the theorem is an immediate consequence of Proposition~\ref{Carleson d2}.
\hfill$\Box$

\medskip
\noindent\emph{Proof of} Theorem~\ref{Carlson D}. This result follows by Propositions~\ref{Carleson d} and~\ref{Carleson d2}. Namely, these propositions characterize the $q$-Carleson measures for $A^p_\omega$ in the cases $0<p\leq q <\infty$ and $0<q<p<\infty$, respectively.   
\hfill$\Box$

\medskip
\noindent\emph{Proof of} Theorems~\ref{theorem1} and~\ref{Th}. Theorems~\ref{theorem1} and~\ref{Th} are immediate consequences of Propositions~\ref{bouned pq} and \ref{compact pq} and Propositions~\ref{bounded p} and \ref{compact p}, respectively.
\hfill$\Box$

\section*{Acknowledgement}
The author would like to thank Prof. Jouni R\"atty\"a for his valuable advice which helped to clarify the whole paper. The author is very grateful to the  referee for a detailed reading of the paper and recommendations.

\end{document}